\newtheorem{theorem}{Theorem}[section]
\newtheorem{lemma}[theorem]{Lemma}
\newtheorem{proposition}[theorem]{Proposition}
\newtheorem{corollary}[theorem]{Corollary}
\newtheorem{remark}[theorem]{Remark}
\numberwithin{equation}{section}
\def\gd{\delta}
\def\gs{\sigma}
\def\bfap{{\bar \fa^+}}
\def\fP{{\mathfrak P}}
\def\bare{{\bar e}}
\def\Hom{{\rm Hom}}
\def\Tr{{\rm Tr}}
\def\after{\circ}
\def\inp#1#2{\langle\,#1, #2\, \rangle}
\def\fm{{\mathfrak m}}
\def\fad{\fa^*}
\def\reg{{\rm reg}}
\def\gS{\Sigma}
\def\ga{\alpha}
\def\gb{\beta}
\def\Cartan{\theta}
\def\fp{{\mathfrak p}}
\def\dotvar{\,\cdot\,}
\def\cF{{\mathcal F}}
\def\sl{sl}
\def\ft{{\mathfrak t}}
\def\fz{{\mathfrak z}}
\def\Re{{\rm Re}\,}
\def\Im{{\rm Im}\,}
\def\bilform{{\rm B}}
\def\End{{\rm End}}
\def\Vect{{\rm Vect}}
\def\ad{{\rm ad}\,}
\def\fa{{\mathfrak a}}
\def\fk{{\mathfrak k}}
\def\fg{{\mathfrak g}}
\def\fn{{\mathfrak n}}
\def\gf{{\varphi}}
\def\ga{\alpha}
\def\gl{\lambda}
\def\om{\omega}
\def\R{{\mathbb R}}
\def\C{{\mathbb C}}
\def\Del{\Delta}
\def \cC{{\mathcal C}}
\def\pr{{\rm pr}}
\def\Ad{{\rm Ad}}
\def\cO{{\mathcal O}}
\def\un1{\underline}
\def\Om{\Omega}
\begin{document}

\baselineskip=16pt

\title{Symplectic geometry of semisimple orbits}

\author[H. Azad]{Hassan Azad}

\address{Department of Mathematics, Lahore University of
Management Sciences, Pakistan; Department of Mathematics and
Statistics, King Fahd University, Saudi Arabia}

\email{hassanaz@kfupm.edu.sa}

\author[E. P. van den Ban]{Erik van den Ban}

\address{Mathematisch Instituut, Universiteit Utrecht,
PO Box 80 010, 3508 TA Utrecht, The Netherlands}

\email{E.P.vandenBan@uu.nl}

\author[I. Biswas]{Indranil Biswas}

\address{School of Mathematics, Tata Institute of Fundamental
Research, Homi Bhabha Road, Bombay 400005, India}

\email{indranil@math.tifr.res.in}

\date{}

\begin{abstract}

Let $G$ be a complex semisimple
group, $T\subset G$ a maximal torus and $B$ a Borel subgroup of $G$ containing
$T.$
Let $\Omega$ be the Kostant--Kirillov holomorphic
symplectic structure on the adjoint orbit ${\mathcal O} = \Ad(G)c \simeq G/Z(c)$,
where $c\,\in\, {\rm Lie}(T)$, and $Z(c)$ is
the centralizer of $c$ in $G$.
We prove that the real symplectic form $\text{Re}\,\Omega$
(respectively, $\text{Im}\,\Omega$)
on ${\mathcal O}$ is exact if and only if all the
eigenvalues $\ad(c)$ are real (respectively, purely imaginary).
Furthermore, each of these real symplectic manifolds
is symplectomorphic to the cotangent
bundle of the partial flag manifold $G/Z(c)B,$  equipped with the
Liouville symplectic form. The latter result is generalized to
hyperbolic adjoint orbits in a real semisimple Lie algebra.

\end{abstract}

\maketitle

\section{Introduction}

This work grew out of attempts to understand the following
theorem of Arnold \cite[p. 100, Theorem 1]{Ar}.

\begin{theorem}[\cite{Ar}]\label{ar.th.}
Let $\Omega$ be the standard complex symplectic structure on a
regular coadjoint orbit of the group ${\rm SL}(n+1, {\mathbb C})$.
This orbit, equipped with the real symplectic structure ${\rm Im}
(\Omega)$, is isomorphic to the total space of the cotangent bundle
of the variety parametrizing the complete flags in ${\mathbb C}^{n+1}$,
equipped with the standard Liouville symplectic structure on it,
if and only if all the eigenvalues of some (and hence any) matrix
in the orbit are real.
\end{theorem}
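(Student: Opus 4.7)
\emph{Setup and candidate map.} I would start by conjugating so that $c\in\fa$, the real split Cartan, is regular with real eigenvalues. Fix the Iwasawa decomposition $G=KAN$ with $K=\mathrm{SU}(n+1)$, $A$ the positive real diagonal matrices, and $N$ strictly upper unitriangular; set $M:=K\cap T$, so that $T=MA$, $B=MAN$, and $G/B\cong K/M$ as real manifolds. Since $c$ is regular and real, $\ad(c):\fn\to\fn$ is invertible, hence $\Ad(N)c=c+\fn$; using that $A$ fixes $c$, the map
\[
 \Psi:K\times_M N\longrightarrow \mathcal{O},\qquad (k,n)\longmapsto \Ad(kn)c,
\]
is a diffeomorphism. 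Composing with the $M$-equivariant exponential $\fn\to N$, and then with the $M$-equivariant $\R$-linear isomorphism $\fn\cong(\fk/\fm)^*$ induced by pairing through a suitable part of the Killing form $\bilform$ along the Iwasawa splitting $\fg=\fk\oplus\fa\oplus\fn$, this yields the candidate diffeomorphism $\Phi:T^*(G/B)=K\times_M(\fk/\fm)^*\longrightarrow \mathcal{O}$.

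\emph{Identifying the symplectic form.} It remains to show $\Phi^*(\Im\Omega)=d\lambda_0$, the Liouville 2-form. My plan is to produce a $K$-invariant primitive $\alpha$ of $\Im\Omega$ on $\mathcal{O}$ that vanishes on the $K$-orbit $\Ad(K)c$. The canonical 1-form $\lambda_0$ on $T^*(G/B)$ is uniquely characterized by being $K$-invariant and vanishing on the zero section, and under $\Phi$ the zero section corresponds to $\Ad(K)c$; hence $\Phi^*\lambda_0=\alpha$ will follow, and therefore $\Phi^*(d\lambda_0)=d\alpha=\Im\Omega$. Three structural features make such a primitive available: (i) the $N$-orbit $c+\fn$ through $c$ is Lagrangian for $\Im\Omega$, because $\bilform(\fn,\fn)=0$ forces isotropy and real dimensions match, and these orbits correspond to the cotangent fibers of $\Phi$; (ii) the $K$-orbit $\Ad(K)c$ is isotropic for $\Im\Omega$, since (after tracking the sign convention for $\Omega$) this reduces to vanishing of the relevant part of $\bilform$ restricted to $\fk\times\fk$; (iii) the $K$-action on $\mathcal{O}$ is Hamiltonian for $\Im\Omega$ with a standard moment map, which pins down the $K$-invariant primitive up to a pullback from $K/M$, and the vanishing-on-$\Ad(K)c$ condition normalizes this residual freedom.

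\emph{Converse.} A cotangent bundle always carries an exact symplectic form, so any symplectomorphism $(\mathcal{O},\Im\Omega)\cong T^*(G/B)$ forces $\Im\Omega$ to be exact. Suppose, then, that some eigenvalue of $c$ is non-real. Then some root $\ga$ satisfies $\Im\ga(c)\neq0$, and the associated $\mathrm{SU}(2)$-subgroup $K_\ga\subset K$ has orbit $S:=K_\ga\cdot c\subset \mathcal{O}$ diffeomorphic to a 2-sphere. A direct Kirillov-type computation yields $\int_S\Im\Omega\neq0$, obstructing exactness of $\Im\Omega$ and completing the characterization.

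\emph{Principal difficulty.} I expect the delicate step to be verifying $\Phi^*(\Im\Omega)=d\lambda_0$ on the nose, rather than merely up to a closed form. The uniqueness argument sketched above reduces this to checking $K$-equivariance of the primitive (transparent from the Iwasawa construction) and its vanishing on $\Ad(K)c$ (a computation with $\bilform$ restricted to summands of the Iwasawa decomposition). Getting the orthogonality and sign conventions consistent between $\Re\Omega$, $\Im\Omega$, and the Kirillov formula $\Omega_x([X,x],[Y,x])=\bilform(x,[X,Y])$ will be the main bookkeeping hurdle.
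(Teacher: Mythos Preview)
Your converse argument via the $\mathrm{SU}(2)$ two-sphere is sound and parallels the paper's Lemmas~2.4--2.6.  The forward direction, however, has a genuine gap, and it is precisely at the point you flagged as delicate.

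First, the uniqueness claim is false: $\lambda_0$ is \emph{not} the only $K$-invariant one-form on $T^*(G/B)$ vanishing along the zero section (any $K$-invariant fiberwise-homogeneous multiple, e.g.\ $2\lambda_0$, also qualifies), so knowing that $\Phi^*\alpha$ is $K$-invariant and vanishes on the zero section does not force $\Phi^*\alpha=\lambda_0$.  Adding the moment-map condition does not rescue this without already knowing $d(\Phi^*\alpha)=\sigma$, which is exactly what you are trying to prove.  Second --- and this is the deeper issue --- your candidate map $\Phi$, built from the \emph{group} exponential $\fn\to N$ composed with a linear identification $\fn\cong(\fk/\fm)^*$ through the Killing form, is in general \emph{not} a symplectomorphism at all.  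The paper shows (Theorem~3.1 and its proof) that the fiber-preserving symplectomorphism sending the zero section to $K/Z_K(c)$ is unique, and it is given on each fiber by the time-one flow $\eta\mapsto e^{H_\eta}\bar e$ of the Hamiltonian vector fields $H_\eta$ determined by (3.1).  These flows are computed explicitly in Lemma~6.6 and Proposition~6.7: pulled back to $\fn$ they solve the genuinely nonlinear ODE $U'(t)=V+\rho(\ad U(t))V$, which is polynomial in $t$ but does \emph{not} reduce to $U(t)=tV$ once $\fn$ is non-abelian.  Even in the abelian case ($\mathrm{SL}(2,\C)$) the flow is $t\mapsto t\,T_\lambda^{-1}V$ with $T_\lambda=\ad(X_\lambda)|_{\fn}$, so your map differs from the correct one by the factor $T_\lambda$, and pulls back $\sigma$ to a nontrivial rescaling rather than to $\Im\Omega$.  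The substantive work in the paper is exactly the global analysis of these Hamiltonian flows (completeness, and that $V\mapsto e^{H_V}e_N$ is a diffeomorphism $\fn\to N$); your Iwasawa shortcut bypasses this and lands on the wrong diffeomorphism.  A minor additional point: with the paper's conventions it is $\Re\Omega$, not $\Im\Omega$, for which the $K$-orbit is Lagrangian when the eigenvalues of $\ad(c)$ are real (Lemma~2.4), so you should reconcile sign conventions with Arnold's statement before proceeding.
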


A proof of this theorem is outlined in \cite[p. 100--101]{Ar}. The
assertion about the equivalence of the above mentioned
symplectic structure $\text{Im}(\Omega)$ with the one on total space of the cotangent bundle of
the flag variety is made in lines 13--15 of \cite[p. 101]{Ar}.
Apparently,
the regular coadjoint orbit is identified with an adjoint orbit in
$\sl(n+1, \C)$ through the non-degenerate bilinear form $(X,Y) \mapsto {\rm Tr}(XY),$
so that it makes sense to speak of eigenvalues of matrices in the orbit.

Arnold's result may be reformulated in terms of the theory of semisimple Lie groups.
In the present paper we will state this reformulation and prove a generalization
of it.

Let $G$ be a connected complex semisimple Lie group. Its Lie algebra, which will be denoted by  $\fg,$ comes
equipped with the Killing form $B,$ which is an $\Ad(G)$-invariant symmetric non-degenerate
bilinear form. Given an element $c \in \fg,$ we denote by $B(c)$ the complex linear functional
on $\fg$ defined by $X \mapsto B(c,X).$ Accordingly, the Killing form is viewed as
a $G$-equivariant linear isomorphism
$$
B:  \fg \buildrel\simeq\over\longrightarrow \fg^*.
$$
Unless specified otherwise, we will use $B$ to identify $\fg$ with $\fg^*.$
In particular, by pull-back under $B$ of the canonical Kostant-Liouville holomorphic symplectic
form on any coadjoint orbit $\cO \subset \fg^*$ may be viewed as a holomorphic
symplectic form on the associated adjoint orbit $B^{-1}(\cO).$

If $c \in \fg,$ then by $\ad(c)$ we denote the endomorphism $Y \mapsto [c,Y]$ of $\fg.$
The element $c$ is called semisimple if and only if $\ad(c)$ diagonalizes. Equivalently,
this means that $c$ is contained in the Lie algebra of a maximal torus (or Cartan subgroup) $T$ of $G.$
The centralizer of $c$ in $G$ is denoted by $Z(c).$ If $c$ is semisimple,
then $Z(c)$ is known to be the Levi component of a parabolic subgroup $P$ of $G.$
In fact, one may take $P = Z(c)B,$ where $B$ is a Borel subgroup containing
a maximal torus which contains
$c.$ We will prove the following generalization of Arnold's result.

\begin{theorem}\label{t2}
Let $G$ be a connected complex semisimple group, and let $c$ be a semisimple element of its
Lie algebra $\fg.$ Let $\Omega$ be the Kostant--Kirillov holomorphic symplectic
form on the orbit $\cO = \Ad(G)c \simeq G/Z(c).$
Then the  real and imaginary parts ${\rm Re}\,\Om$ and ${\rm Im}\,\Om$ are real symplectic forms on $\cO.$
Moreover, the following hold.
\begin{enumerate}
\item[{\rm (a)}]
The form ${\rm Re}\,\Om$
(respectively, ${\rm Im}\,\Om$)
on ${\mathcal O}$ is exact
if and only if all eigenvalues
of $\ad (c)$ are real
(respectively, purely imaginary).
\item[{\rm (b)}]
In either case, these symplectic manifolds with exact real symplectic forms are
symplectomorphic to the total space of the cotangent bundle
of $G/P,$ equipped with the Liouville symplectic form,
where $P$ is any parabolic subgroup of $G$ with Levi component $Z(c).$
\end{enumerate}
\end{theorem}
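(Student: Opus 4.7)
My plan is to establish both parts by constructing the symplectomorphism in (b), from which the ``if'' direction of (a) follows via the exactness of the Liouville form; the ``only if'' direction of (a) I will handle separately by pairing with a $2$-cycle. Throughout, I will exploit the symmetry $c\mapsto\rmi c$: the biholomorphism $x\mapsto\rmi x$ from $\cO=\Ad(G)c$ onto $\Ad(G)(\rmi c)$ pulls back the Kostant--Kirillov form of the latter to $\rmi\Om$, so it interchanges $\Re\Om$ with $-\Im\Om$ and swaps real and purely imaginary eigenvalues of $\ad$. This reduces the two cases of the theorem to each other, so I will focus on the case when $\ad(c)$ has real eigenvalues. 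As a preliminary, $\Re\Om$ and $\Im\Om$ are both closed and non-degenerate: closedness is immediate, and non-degeneracy follows from the $\C$-bilinearity of $\Om$ together with the complex structure on $\cO$.

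Assume $\ad(c)$ has real eigenvalues; after $G$-conjugation, $c\in\rmi\fk$ for a compact real form $\fk$ with Cartan involution $\Cartan$, so $\Cartan(c)=-c$. Let $K\subset G$ be the maximal compact, and choose a positive system with $\ga(c)\ge 0$ on $\Phi^+$. Set $\fn_P = \bigoplus_{\ga(c)>0}\fg_\ga$ and $\bar\fn_P = \Cartan(\fn_P)$, so $P=Z(c)B$ has Lie algebra $\fp=\fz(c)\oplus\fn_P$. Via the Killing form identification $(\fg/\fp)^*\cong\bar\fn_P$, we have $T^*(G/P)\cong G\times_P\bar\fn_P$; the transitive action of $K$ on $G/P$ with isotropy $K_c:=K\cap Z(c)$ then gives the identification of real vector bundles $T^*(G/P)\cong K\times_{K_c}\bar\fn_P$. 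I would define
\begin{equation*}
\Phi: K\times_{K_c}\bar\fn_P \longrightarrow \cO, \qquad \Phi([k,\xi])=\Ad(k)(c+\xi).
\end{equation*}
Since $\ad(c)$ is invertible on $\bar\fn_P$ (each root $\ga$ with $\fg_\ga\subset\bar\fn_P$ satisfies $\ga(c)<0$), the polynomial map $\bar u\mapsto\Ad(\bar u)c$ is a $K_c$-equivariant isomorphism $\exp(\bar\fn_P)\to c+\bar\fn_P$, so $\Phi$ takes values in $\cO$; matching real dimensions together with injectivity (from the uniqueness of the implicit Bruhat-type slice) then shows $\Phi$ is a diffeomorphism.

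The main obstacle is to establish $\Phi^*\Re\Om=d\vartheta$, where $\vartheta$ denotes the tautological Liouville $1$-form on $T^*(G/P)$. My plan is to exhibit a $1$-form $\gl$ on $\cO$ with $\Phi^*\gl=\vartheta$ and $d\gl=\Re\Om$. Under the Killing pairing, $\vartheta$ pairs the fiber coordinate $\xi\in\bar\fn_P$ with the base direction, and the transported $\gl$ on $\cO$ arises naturally by representing a tangent vector $v = d\Phi([X,c+\xi]+\eta)$ with $X\in\fk$ and $\eta\in\bar\fn_P$, then setting $\gl_x(v) = -B(\xi+\Cartan\xi,\,X)$ (which one checks is well-defined modulo the isotropy action). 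The identity $d\gl=\Re\Om$ reduces, by $K$-equivariance, to an infinitesimal computation at the base point $c$; this in turn follows from a careful bookkeeping using $\Cartan(c)=-c$, the root decomposition $\fg=\fz(c)\oplus\fn_P\oplus\bar\fn_P$, and the fact that $B$ pairs $\fn_P$ with $\bar\fn_P$ nondegenerately. Once this step is verified, (b) is proved, and the ``if'' half of (a) follows from exactness of $d\vartheta$.

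For the converse in (a), suppose $c$ has nontrivial compact part $c_c\in\fk$ in the decomposition $c=c_c+c_s$ with $c_s\in\rmi\fk$. I would pick a root $\ga$ with $\ga(c_c)\neq 0$; then $\ga(c)\neq 0$ (as $\ga(c_c)\in\rmi\R$, $\ga(c_s)\in\R$), so the compact $\sl_2$-subgroup $K_\ga\cong{\rm SU}(2)$ corresponding to $\ga$ has a genuine $2$-sphere orbit $\gS_\ga := K_\ga\cdot c\subset\cO$. From
\begin{equation*}
\Om_c([X,c],[Y,c])=B(c,[X,Y])=B(c_c,[X,Y])+B(c_s,[X,Y])
\end{equation*}
evaluated at $X,Y$ in the real Lie algebra of $K_\ga$, the first summand is real and the second purely imaginary, so $\Re\Om|_{\gS_\ga}$ is a nonzero $K_\ga$-invariant $2$-form on $S^2$ with $\int_{\gS_\ga}\Re\Om$ proportional to $\ga(c_c)/\rmi\in\R\setminus\{0\}$. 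Were $\Re\Om$ exact, this integral would vanish by Stokes, a contradiction. The ``only if'' for $\Im\Om$ then follows by the initial symmetry, completing (a).
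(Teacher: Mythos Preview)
Your overall architecture is sound and genuinely different from the paper's. For (b) the paper constructs the symplectomorphism abstractly by integrating commuting Hamiltonian vector fields along the Lagrangian fibers of $G/Z(c)\to G/P$, and the bulk of the work lies in proving these flows are complete. You instead write down an explicit candidate $\Phi([k,\xi])=\Ad(k)(c+\xi)$ and propose to verify directly that it is symplectic. Your argument for the ``only if'' half of (a), integrating $\Re\Om$ over an $\mathrm{SU}(2)$-orbit $\gS_\ga$, is also different from the paper's route (which shows that any $K$-invariant closed $2$-form on $K/H$ is exact only if it vanishes) and is correct.

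There is, however, a real gap in your treatment of (b). First a minor point: via the Killing form one has $(\fg/\fp)^*\cong \fp^\perp=\fn_P$, not $\bar\fn_P$; with $\bar\fn_P$ the map $\xi\mapsto c+\xi$ lands in $\Ad(\bar N)c$, which is a fiber of the \emph{opposite} fibration $G/Z(c)\to G/\bar P$. This is easily repaired. The substantive issue is the sentence ``the identity $d\gl=\Re\Om$ reduces, by $K$-equivariance, to an infinitesimal computation at the base point $c$.'' This is not valid as stated: $K$ does not act transitively on $\cO$, so $K$-invariance only reduces the verification to a transversal slice, namely to \emph{every} point $c+\xi$ with $\xi\in\fn_P$, not just $\xi=0$. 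Equivalently, your $1$-form $\gl$ cannot be $G$-invariant (the only $G$-invariant $1$-form on $G/Z(c)$ is zero, since $T\subset Z(c)$ has no fixed vectors in $\fn_P\oplus\bar\fn_P$), so you cannot use $G$-transitivity either.

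The good news is that the computation at a general point $c+\xi$ is not much harder and does go through. Writing tangent vectors as $d\Phi(X_i,\eta_i)=[X_i,c+\xi]+\eta_i$ with $X_i\in\fk$, $\eta_i\in\fn_P$, one finds
\[
\Om_{c+\xi}\bigl(d\Phi(X_1,\eta_1),d\Phi(X_2,\eta_2)\bigr)
= B(c,[X_1,X_2]) + B(\xi,[X_1,X_2]) + B(X_1,\eta_2) - B(X_2,\eta_1),
\]
after expressing $\eta_i=[\zeta_i,c+\xi]$ with $\zeta_i\in\fn_P$ and using $B(\fn_P,\fn_P)=0$. The key observation (which you should make explicit) is that $\Re B(c,[X_1,X_2])=0$ because $c\in\rmi\fk$ and $[X_1,X_2]\in\fk$, so $B(c,[X_1,X_2])\in\rmi\R$. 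The remaining three terms match, up to an overall sign, the standard formula
\[
\gs_{[e,\hat\xi]}\bigl((X_1,\hat\eta_1),(X_2,\hat\eta_2)\bigr)
=\hat\eta_1(X_2)-\hat\eta_2(X_1)-\hat\xi([X_1,X_2])
\]
for the Liouville form on $K\times_{K_c}(\fk/\fk_c)^*$ under the identification $\fn_P\ni\xi\leftrightarrow\hat\xi=\Re B(\xi,\,\cdot\,)$. Thus $\Phi^*\Re\Om=-\gs$, and composing with fiberwise negation gives the desired symplectomorphism. So your map is essentially correct, but the justification you wrote stops short of the actual computation that makes it work; the vanishing of $\Re B(c,[\fk,\fk])$ is precisely where the hypothesis ``$\ad(c)$ has real eigenvalues'' enters, and it deserves to be stated rather than folded into ``careful bookkeeping.''
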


In fact, we will prove a refinement of assertion (b) in the more general context of
a real hyperbolic adjoint orbit of a real semisimple Lie group;
see Theorems \ref{t: complex Lagrangian fibration} and \ref{t: real Lagrangian fibration}.

Here are a few words about our interpretation of the above mentioned result of Arnold.

Set $G\, =\, \text{SL}(n+1, {\mathbb C})$, and let $T\, \subset\, G$ be the subgroup of diagonal matrices.
For any ${c}\, \in\, \text{Lie}(T)$ with distinct eigenvalues we have
$Z(c) = T,$ so that
the adjoint orbit of $c$ can be identified with $G/T$. Let $c_i$ denote
the $i$-th diagonal entry of $c.$
The eigenvalues
of $\ad(c)$ are all the  numbers of the form $c_i - c_j,$ with $1 \leq i,j \leq n.$
As $\sum_j c_j = 0,$ it follows that the eigenvalues of $c$ are all real if and only
if those of $\ad(c)$ are.

The group $G$ naturally acts on the manifold $\cF$ of full flags in $\C^{n+1}.$ The stabilizer
of the standard flag $\C\subset\C^2 \subset \cdots \subset \C^{n}$ is
the subgroup $B\, \subset\, G$  of upper triangular matrices. Consequently, $\cF \simeq G/B,$
as $G$-manifolds. Arnold's result asserts that $\Ad(G)c \simeq T^*\cF \simeq T^*(G/B)$ as real
symplectic manifolds.
In the present set-up, our generalization concerns the analogue for an arbitrary
diagonal matrix $c$ and the associated partial flag manifold $G/P.$

As $G/T\,\simeq \, G\times_B (B/T)$,
the natural projection
$$
\psi\, : \, G/T\, \longrightarrow\, G/B
$$
makes $G/T$ a fiber bundle over the full flag manifold $G/B$; its
fibers are translates of $B/T$.  Since
$G/B$ is a complete variety, and $G/T \simeq \cO$ an affine variety, the
bundle $\psi$ does not admit any holomorphic sections.

On the other hand, let $K\, =\, \text{SU}(n+1)$. Then the natural map $j: K/K\cap T \to G/T$
determines a real analytic section of $\psi.$ Indeed, since $G = KB$ and $K \cap B = K \cap T,$
the map $ K/K \cap T \to G/B$ is a real analytic diffeomorphism. Composing its inverse with $j$ we obtain
a section $s: G/B \to G/T.$ Moreover,
$$
G/T\,\simeq \, G\times_B (B/T)\,=\, KB\times_B(B/T)\, \simeq\,
K\times_{K\cap B} (B/T)\,\simeq K \times_{K\cap T} R_u(B),
$$
where $R_u(B)$ denotes the unipotent radical of $B.$ This unipotent radical has the structure of
a complex linear space on which the adjoint action of $T$ linearizes. Therefore,
the last isomorphism realizes $G/T$ as a real analytic vector bundle
over $K/(K\cap T) \simeq G/B$ with $s$ corresponding to the zero section.
This real analytic vector bundle is
in fact isomorphic to the cotangent bundle of $K/(K \cap T)$. It follows that
the  inclusion $K/(K \cap T) \to G/T,$ and hence the real analytic section $s: G/B \to G/T,$
induces an isomorphism on the cohomology algebras of these spaces.
Hence, one can decide whether a given closed
differential
two-form on $G/T$ is exact from its restriction to $K/(K\cap T).$
This is roughly a translation in group theoretic
terms of \cite[p. 100--101]{Ar}. The generalization of this argument to our
more general setting is worked out in the next section and leads to part (a) of Theorem \ref{t2}.

Assertion (b) in Theorem \ref{t2} is based on the crucial observation that
the fibration $\psi: G/Z(c) \to G/P$ has Lagrangian fibers and that $K/K\cap Z(c) \hookrightarrow G/T$
defines a Lagrangian section. This implies the existence of commuting vertical vector fields
on the bundle $\psi$ and is enough to establish the existence of
a local symplectic isomorphism along the
section $K/K\cap Z(c);$ see Section \ref{sect3}. This argument is indicated in \cite{Ar},
but an argument for the existence of a globally defined symplectic isomorphism seems to
be lacking.

We prove the existence of such a global symplectomorphism
in Section \ref{sect4} by showing that the mentioned vertical vector
fields have complete flows which can be used to construct global coordinates along the fibers
of $\psi.$ Moreover, we give this argument of integration
in the more general setting of real hyperbolic adjoint orbits for a
real semisimple Lie group.

The above mentioned commuting vector fields are used to construct
a $K$--equivariant diffeomorphism
$$
\phi\, :\, K\times_{K\cap Z(c)}(\fg/{\rm Lie}(P))^*\, \longrightarrow
\, G/Z(c) \, .
$$
The pull back of $\text{Re}(\Omega)$ --- in the notation of
Theorem \ref{t2} --- is the Liouville form on
$K\times_{K\cap Z(c)}(\fg/{\rm Lie}(P))^*$ identified with $T^*(G/P)$.

{\bf Acknowledgement:\ } One of us (EvdB) would like to thank Hans Duistermaat for
a helpful discussion on symplectic geometry.

\section{Complex semisimple orbits}\label{sec1}
We will recall some generalities concerning the Kostant-Kirillov symplectic form,
after fixing the notation.
At first we assume that $G$ is a connected Lie group over the base field $k,$ which is
either $\R$ or $\C.$ Let $\eta$ be an element of $\fg^*,$ the $k$-linear dual of $\fg.$
Let $Z(\eta)$ denote the stabilizer of $\eta$ in $G,$
and let $\fz(\eta)$ be the Lie algebra of $Z(\eta).$
The map $x \mapsto \eta\after \Ad(x)^{-1}$
induces a $G$-equivariant diffeomorphism from $G/Z(\eta)$ onto the coadjoint orbit
$\cO  = \cO_\eta \subset \fg^*$ through $\eta.$

The Kostant-Kirillov form $\Omega = \Omega_\eta$ on $\cO$
is defined as follows.
The action of $G$ on $\cO$ gives rise to a Lie algebra homomorphism
from $\fg$ to the space $\Vect(\cO)$ of vector fields on $\cO.$ Given $X \in \fg,$ the
associated vector field $\bar X$ on $\cO$ is given by $\bar X_\xi =: - \xi \after \ad X  \in T_\xi \cO \subset \fg^*,$
where $\xi \in \cO.$
We agree to write $X_\xi$ for $\bar X_\xi$ and note that the map $X \mapsto X_\xi$ descends
to an isomorphism from $\fg/\fz(\xi)$ onto $T_\xi\cO.$ The two-form $\Omega$  on $\cO$
is given by the formula
\begin{equation}\label{e: defi Omega}
\Omega_\xi(X_\xi\, , Y_\xi)\, =\, \xi([X\, ,Y]),
\end{equation}
where $\xi\in \cO$ and $X,Y \in \fg.$ Here we note  that the expression
on the right-hand side of (\ref{e: defi Omega}) depends on $X$ and $Y$ through their images in
$\fg/\fz(\xi),$ so that $\Omega$ is a well-defined.
The form $\Omega$ is $G$-invariant. Moreover, it is readily seen
to be closed and non-degenerate at the point $\xi,$ hence it is a symplectic form. See
\cite[p. 6]{Ki}.
Note that if $k = \C,$ then $\cO$ is a complex submanifold of $\fg^*,$ and $\Omega$
is a holomorphic symplectic form.

Via the natural diffeomorphism $G/Z(\eta) \to \cO$ the form $\Omega$ may be pulled-back
to a form on $G/Z(\eta).$ The resulting form, also denoted by $\Omega,$ is the unique
$G$-invariant two-form which at the
element $\bar e: = eG(\eta)$ is given by the formula
\begin{equation}
\label{e: Omega at e}
\Omega_{\bar e} (X_{\bar e}, Y_{\bar e}) = \eta([X,Y]), \qquad (X,Y \in \fg).
\end{equation}
We now assume that $G$ is a semisimple connected Lie group over $k$,
so that the Killing form $B(X,Y) = \Tr(\ad(X)\ad(Y)$ is non-degenerate on $\fg.$
The form $B$ is $G$-invariant and symmetric. Hence it induces
a $G$-equivariant linear isomorphism $\fg \to \fg^*$ that maps adjoint orbits
diffeomorphically and $G$-equivariantly onto coadjoint orbits. Let $\xi \in \fg^*$ and let $c = c_\xi
= B^{-1}(\xi).$ This means that
\begin{equation}
\label{e: eta and c}
\eta(Y) = B(c, Y), \qquad (Y \in \fg).
\end{equation}
Then $Z(\eta)$ coincides with $Z(c),$ the centralizer of $c$ in $G.$ Via pull back under $B,$
the form $\Omega$ may be realized as a form on the adjoint orbit $\Ad(G)c.$

In the rest of this section we assume that $k = \C,$ so that $G$ is a connected complex semisimple
Lie group. We assume that $\eta \in  \fg^*$ is such that $c = c_\eta$ is semisimple, i.e., the endomorphism
$\ad(c) \in \End(\fg)$ given by $X \mapsto [c, X]$ is diagonalizable. Equivalently, this
means that $c$ is contained in the Lie algebra of a maximal torus in $G.$

We fix a maximal torus $T$ of $G$ whose Lie algebra contains $c,$ and in addition
a maximal compact subgroup $K$ of $G$
for which $K \cap T$ is a maximal torus. Writing $\mathfrak k$  for the (real)
Lie algebra of $K,$ we have
\begin{equation}
\label{e: real form g}
{\mathfrak g}\, =\, {\mathfrak k}\oplus \sqrt{-1}
\cdot{\mathfrak k}
\end{equation}
as a direct sum of real linear spaces.
In particular, $\mathfrak k$ is a real form
of $\fg.$ The associated conjugation map $\theta: \fg \to \fg$ is called the Cartan-involution
associated with $K.$

\begin{lemma}\label{l: real symplectic forms}
With notation as above, let $\Omega$ be the holomorphic
Kostant-Kirillov symplectic form on $G/Z(c) = G/Z(\eta).$ Then both
$\Re \Omega$ and $\Im \Omega$ are real symplectic forms on $G/Z(c).$
\end{lemma}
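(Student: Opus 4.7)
The plan is to reduce the assertion to a pointwise linear-algebra statement about complex-valued alternating forms on a complex vector space, and then invoke the fact, recorded just above the lemma, that $\Omega$ is a holomorphic symplectic form on $\cO \simeq G/Z(c)$.

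First I would observe that closedness is immediate. Writing $\Omega = \Re\,\Omega + \rmi\, \Im\,\Omega$ as a decomposition into real 2-forms on the underlying real manifold of $G/Z(c)$, closedness of $\Omega$ gives $0 = d\Omega = d\,\Re\,\Omega + \rmi\, d\,\Im\,\Omega$, so both $d\,\Re\,\Omega = 0$ and $d\,\Im\,\Omega = 0$. This uses only that $d$ commutes with taking real and imaginary parts of a complex-valued differential form.

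The substantive step is pointwise non-degeneracy. I would prove the following linear algebra lemma: if $V$ is a finite-dimensional complex vector space and $\omega \colon V \times V \to \C$ is a $\C$-bilinear alternating form which is non-degenerate (as a $\C$-bilinear form), then both $\Re\,\omega$ and $\Im\,\omega$ are non-degenerate $\R$-bilinear alternating forms on $V$ viewed as a real vector space. The key identity is $\omega(\rmi X, Y) = \rmi\, \omega(X, Y)$, which on real and imaginary parts reads
\[
\Re\,\omega(\rmi X, Y) = -\Im\,\omega(X, Y), \qquad \Im\,\omega(\rmi X, Y) = \Re\,\omega(X, Y).
\]
Suppose $X \in V$ lies in the radical of $\Re\,\omega$, i.e.\ $\Re\,\omega(X, Y) = 0$ for all $Y \in V$. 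Replacing $Y$ by $\rmi Y$ and using the identity above yields $\Im\,\omega(X, Y) = 0$ as well for all $Y$, so $\omega(X, \,\cdot\,) \equiv 0$, hence $X = 0$ by non-degeneracy of $\omega$. The argument for $\Im\,\omega$ is identical.

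Finally I would apply this pointwise at each $p \in G/Z(c)$ with $V = T_p(G/Z(c))$ viewed as a complex vector space via the complex structure coming from $G/Z(c)$ being a complex manifold, and $\omega = \Omega_p$. Since $\Omega$ is a holomorphic symplectic form, $\Omega_p$ is $\C$-bilinear, alternating, and non-degenerate, so the lemma produces non-degeneracy of $(\Re\,\Omega)_p$ and $(\Im\,\Omega)_p$ as real alternating forms. Combined with closedness, this shows $\Re\,\Omega$ and $\Im\,\Omega$ are real symplectic forms on $G/Z(c)$. I do not anticipate a genuine obstacle here; the only point that requires care is remembering that the Kostant--Kirillov form $\Omega$ is $\C$-bilinear (not hermitian) on each tangent space, which is exactly what makes the real-linear-algebra argument go through.
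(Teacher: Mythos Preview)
Your proof is correct, but it proceeds along a different line than the paper's. You argue pointwise: a nondegenerate $\C$-bilinear alternating form on a complex vector space has nondegenerate real and imaginary parts, because $\C$-bilinearity lets you trade $\Re$ for $\Im$ by multiplying one argument by $\rmi$. This is clean, entirely elementary, and in fact proves the more general statement that the real and imaginary parts of \emph{any} holomorphic symplectic form on a complex manifold are real symplectic.

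The paper instead gives a structural argument specific to the coadjoint setting. It first shows that $Z(\eta) = Z(\Re\eta) = Z(\Im\eta)$, using that the coadjoint action of $G$ on $\fg_\R^*$ commutes with pull-back by $J$ (multiplication by $\rmi$), and that $J^*\Re\eta = -\Im\eta$. It then observes that $\Re\Omega$ is exactly the Kostant--Kirillov form on the real coadjoint orbit through $\Re\eta$ (for $G$ viewed as a real group), and likewise for $\Im\Omega$ and $\Im\eta$; nondegeneracy and closedness then follow from the general Kostant--Kirillov theory. What the paper's approach buys is this identification of $\Re\Omega$ and $\Im\Omega$ as honest Kostant--Kirillov forms on real coadjoint orbits, which is the conceptual bridge to the real semisimple framework developed later in the paper. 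Your argument is shorter and more general, but it does not yield that identification.
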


\begin{proof}
We will write $\fg_\R$ for $\fg,$ viewed as a real Lie algebra. Accordingly, we put
$\fg_\R^*$ for the real linear dual of $\fg_\R.$ Then $\fg_\R^* = \Hom_\R(\fg, \R).$
Both $\Re \eta$ and $\Im \eta$ belong to $\fg_\R^*.$ Let $Z(\Re \eta)$ and $Z(\Im \eta)$
be the stabilizers of these elements for the coadjoint action for $G,$ viewed as
a real Lie group. We claim that
$$
Z(\eta) = Z(\Re \eta) = Z(\Im \eta).
$$
Indeed, this is seen as follows.
Let $J$ denote the linear automorphism of $\fg$  given by $X \mapsto \sqrt{-1}\cdot X.$
Pull-back by $J$ induces the real linear automorphism  $J^*$ of $\fg_\R^*$
given by $\xi \mapsto \xi \after J.$
As $G$ is a complex Lie group, the adjoint action of $G$ on $\fg$ commutes with $J.$ Therefore,
the coadjoint action of $G$ on $\fg^*_\R$ commutes with $J^*.$ It follows that
$Z(J^* \xi) = Z(\xi)$ for all $\xi \in \fg_\R^*.$ Now  $J^* \Re \eta = \Re (i \eta) = - \Im \eta,$
from which we see that $Z(\Re \eta) = Z(\Im \eta).$ Since $Z(\eta) = Z(\Re \eta) \cap Z(\Im \eta),$ the claim follows.

We now observe that $\Re \Omega$ is the unique $G$-invariant two-form on $G/Z(\eta) = G/Z(\Re \eta)$
given by $\Re \Omega_{\bar e}(X_\bare, Y_\bare) = [\Re \eta](X,Y).$ This implies that
$\Re \Omega$ is just the Kostant-Kirillov form associated with the coadjoint orbit
through $\Re \eta$ in $\fg_\R,$ with $G$ viewed as a real semisimple Lie group.
Likewise, $\Im \Omega$ is the form associated with the coadjoint orbit through $\Im \eta$ in $\fg_\R^*.$
\end{proof}

In the rest of this section we will prove the following theorem.

\begin{theorem}\label{t2a}
Let
$c\,\in\, {\rm Lie}(T),$ let $\eta = B(c, \dotvar) \in \fg^*$ and let
$\Om$ be the holomorphic Kostant-Kirillov symplectic form on
$G/Z(c)$ defined by (\ref{e: Omega at e}).

The real symplectic form ${\rm Re}\,\Om$
(respectively, ${\rm Im}\,\Om$)
on $G/Z(c)$ is exact if and only if all eigenvalues of $\ad(c)$
are real (respectively, purely imaginary).
\end{theorem}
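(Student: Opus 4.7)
The plan is to follow the strategy sketched in the introduction: reduce the exactness question to a computation on the compact homogeneous space $M := K/(K\cap Z(c))$, and then exploit compactness of $K$ together with the fact that $K\cap Z(c)$ contains the maximal torus $\ft_0 := \fk \cap \ft$ of $\fk$. Setting $P := Z(c)B$, the identification $G/T \simeq K \times_{K \cap T} R_u(B)$ recalled in the introduction extends to $G/Z(c) \simeq K \times_{K\cap Z(c)} R_u(P)$, realizing $G/Z(c)$ as a $K$-equivariant real-analytic vector bundle over $M$ with zero section $\iota: M \hookrightarrow G/Z(c)$. Since $\iota$ is then a homotopy equivalence, $\iota^*: H^2(G/Z(c);\R) \to H^2(M;\R)$ is an isomorphism, and $\Re\Om$ (respectively $\Im\Om$) is exact on $G/Z(c)$ if and only if its pullback to $M$ is exact.

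Next I would compute the pulled-back forms at the base point $\bare \in M$. Decomposing $c = a + \sqrt{-1}\, b$ with $a, b \in \ft_0$ and using that $B$ takes real values on $\fk \times \fk$ (since $\fk$ is a compact real form of $\fg$), formula (\ref{e: Omega at e}) yields
\[
(\iota^*\Re\Om)_{\bare}(X, Y) = B(a, [X, Y]), \qquad (\iota^*\Im\Om)_{\bare}(X, Y) = B(b, [X, Y])
\]
for $X, Y \in \fk$ representing tangent vectors in $\fk/(\fk \cap \fz(c))$. A short root-system calculation shows that $a = 0$ (respectively $b = 0$) is equivalent to all eigenvalues of $\ad(c)$ being real (respectively purely imaginary), since the roots of $(\fg, \ft)$ are imaginary on $\ft_0$, real on $\sqrt{-1}\,\ft_0$, and span $\ft^*$ because $\fg$ is semisimple.

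The main step is the claim that any $K$-invariant closed $2$-form $\omega$ on $M$ which is exact must vanish identically. If $\omega = d\beta$, averaging $\beta$ over $K$ against Haar measure produces (using $K$-invariance of $\omega$) a $K$-invariant primitive $\beta'$ with $d\beta' = \omega$. Such a $\beta'$ is determined by a $(K \cap Z(c))$-invariant linear functional on $\fk/(\fk \cap \fz(c))$. But $K\cap Z(c) \supset \ft_0$, and under $\ad(\ft_0)$ the space $\fk/(\fk \cap \fz(c))$ decomposes as a sum of weight spaces for those roots $\alpha$ with $\alpha(c) \neq 0$; all such weights are nonzero, so there are no nonzero $\ft_0$-invariant functionals on this space, forcing $\beta' = 0$ and thus $\omega = 0$.

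Granted this claim, the theorem follows quickly: $\iota^*\Re\Om$ is exact iff it vanishes identically, iff (by $K$-invariance) its value at $\bare$ vanishes, iff $B(a, [\fk, \fk]) = 0$; since $\fk$ is semisimple, $[\fk, \fk] = \fk$ and $B|_{\fk}$ is non-degenerate, forcing $a = 0$, which is the real-eigenvalue condition. The argument for $\Im\Om$ is identical. The main obstacle is the vanishing-of-invariant-primitives claim, but once one observes that the isotropy $K \cap Z(c)$ contains the full compact torus $\ft_0$, the weight-space decomposition immediately rules out any invariant $1$-form.
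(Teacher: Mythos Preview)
Your proposal is correct and follows essentially the same route as the paper: reduce exactness to the compact orbit $K/(K\cap Z(c))$ via the vector-bundle retraction, compute the pullback at $\bare$ using $c=a+\sqrt{-1}\,b$, and invoke the key lemma that a $K$-invariant exact closed $2$-form on such a quotient must vanish (proved by averaging the primitive and killing the resulting invariant $1$-form via the torus weight decomposition). The only cosmetic difference is that the paper proves the vanishing lemma by first pulling back along $K/T_0 \to K/H$ and then arguing with root vectors on $K/T_0$, whereas you argue directly on $K/(K\cap Z(c))$ using that the isotropy already contains the maximal torus $\ft_0$; your shortcut is equivalent and slightly cleaner.
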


We
will prove Theorem \ref{t2a} through a number of lemmas.

\begin{lemma}\label{l: centralizer stable}
The centralizer $\fz(c)$ is stable under $\theta.$ Equivalently, $\fk \cap \fz(c)$ is a real
form of $\fz(c).$
\end{lemma}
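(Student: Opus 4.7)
The plan is to exploit the root space decomposition of $\fg$ with respect to the Cartan subalgebra $\ft \,=\, \mathrm{Lie}(T)$. The key intermediate point will be to show that $\theta$ sends the root space $\fg_\alpha$ into $\fg_{-\alpha}$ for every root $\alpha$, from which stability of $\fz(c)$ is immediate.

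First I would establish that $\ft$ itself is $\theta$-stable. By hypothesis $K \cap T$ is a maximal torus of the compact group $K$, so its Lie algebra $\fk \cap \ft$ has real dimension equal to the complex dimension of $\ft$. Together with $\fk \cap \ft \subseteq \ft$ and the real decomposition $\fg = \fk \oplus i\fk$ in (\ref{e: real form g}), this forces
$$
\ft \,=\, (\fk \cap \ft) \oplus i(\fk \cap \ft),
$$
exhibiting $\fk \cap \ft$ as a compact real form of $\ft$ and $\theta|_\ft$ as the associated conjugation (so $\theta$ acts as $+1$ on $\fk \cap \ft$ and as $-1$ on $i(\fk \cap \ft)$). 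In particular $\theta(\ft) = \ft$.

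Next I would use the root space decomposition $\fg = \ft \oplus \bigoplus_{\alpha \in \Phi} \fg_\alpha$, under which $\fz(c) = \ft \oplus \bigoplus_{\alpha(c)=0} \fg_\alpha$. For $X \in \fg_\alpha$ and $H \in \ft$, the identity $H = \theta(\theta H)$ together with the fact that $\theta$ is a real Lie algebra automorphism, conjugate-linear over $\C$, yields
$$
[H, \theta X] \,=\, \theta\bigl([\theta H, X]\bigr) \,=\, \overline{\alpha(\theta H)}\,\theta X.
$$
Since every root takes purely imaginary values on the compact real form $\fk \cap \ft$, and hence real values on $i(\fk \cap \ft)$, one checks on each summand that $\overline{\alpha(\theta H)} = -\alpha(H)$. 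Hence $\theta X \in \fg_{-\alpha}$.

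The lemma now follows: if $\alpha(c) = 0$ then $(-\alpha)(c) = 0$, so $\theta(\fg_\alpha) \subseteq \fg_{-\alpha} \subseteq \fz(c)$, and together with $\theta(\ft) = \ft \subseteq \fz(c)$ this gives $\theta(\fz(c)) \subseteq \fz(c)$; equality follows from $\theta^2 = \mathrm{id}$. The equivalent formulation that $\fk \cap \fz(c)$ is a real form of $\fz(c)$ is then the standard observation that any $\theta$-stable complex subspace $V \subseteq \fg$ decomposes as $V = (V \cap \fk) \oplus i(V \cap \fk)$. I do not anticipate a real obstacle here; the only point that requires a moment's care is the dimension count used to promote the hypothesis on $K \cap T$ to $\theta$-stability of $\ft$.
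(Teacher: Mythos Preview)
Your argument is correct, but it takes a different route from the paper's. Both proofs begin with the same observation that $\ft = (\fk\cap\ft)\oplus i(\fk\cap\ft)$ (in the paper this is simply asserted from the maximal-torus hypothesis). From there the paper writes $c = a + \sqrt{-1}\,b$ with $a,b\in\fk\cap\ft$, introduces a $K$-invariant Hermitian inner product, and observes that $\ad(a)$ is anti-Hermitian (purely imaginary spectrum) while $\ad(\sqrt{-1}\,b)$ is Hermitian (real spectrum); since these commute, a simultaneous eigenspace argument gives $\ker\ad(c) = \ker\ad(a)\cap\ker\ad(b)$, and each factor is visibly $\theta$-stable because $a,b\in\fk$. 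Your approach instead invokes the root-space decomposition and proves the structural fact $\theta(\fg_\alpha)=\fg_{-\alpha}$, from which stability of $\fz(c)=\ft\oplus\bigoplus_{\alpha(c)=0}\fg_\alpha$ is immediate. The paper's argument is a bit more self-contained (it does not explicitly appeal to the root decomposition or to the fact that roots are imaginary on the compact form), and it yields the useful byproduct $\fz(c)=\fz(a)\cap\fz(b)$; your argument, on the other hand, isolates the behaviour of $\theta$ on root spaces, which is a reusable piece of structure theory.
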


\begin{proof}
Write $\ft$ for the Lie algebra of the maximal torus $T.$ Since $T \cap K$ is a maximal
torus of $K,$ we have
$$
\ft = \ft \cap \fk + \sqrt{-1}\,( \ft \cap \fk).
$$
Accordingly, we write $c\, =\, a+\sqrt{-1}b$, where $a$ and $b$ belong to $\ft \cap \fk.$
Fix a positive definite $K$-invariant Hermitian inner product $\inp{\dotvar}{\dotvar}$ on $\fg.$
Then $\ad(a)$ is anti-Hermitian, hence diagonalizable with purely imaginary eigenvalues.
Similarly, $\ad (\sqrt{-1} b)$ is diagonalizable with real eigenvalues. Since
$\ad(a)$ and $\ad(\sqrt{-1} b)$ commute, they allow a simultaneous diagonalization.
{}From this we see that $\ker (c)$ is the intersection of $\ker \ad(a)$ and $\ker \ad(b).$
Since both $a$ and $b$ are $\theta$-stable, it follows that $\fz(c) = \ker \ad (c)$
is $\theta$-stable.
\end{proof}

If $g\, \in\, G$ centralizes $c$, then $g$ also centralizes
the one--parameter subgroup $\{\exp(tc)\mid t\in {\mathbb C}\}$
of $G$. The closure of this one--parameter subgroup will be
denoted by $S$. Clearly $g$ centralizes $S$. In other words,
we have $Z(c) \, =\, Z(S)$.

It is well known that the centralizers
of tori are connected reductive. More precisely, $Z(c) = Z(S)$ is the Levi
complement of a parabolic subgroup of $G$ \cite[p. 26, Proposition
1.22]{DM}, \cite{BH}.

Fix a simple system $\Delta_1$ of roots of the reductive group $Z(c)$ relative to the maximal torus $T$ and extend it
to a simple system $\Delta$ of roots of $G$ relative to the same maximal torus.
Let $B$ be the Borel subgroup of $G$ defined by the simple system
of roots $\Del$.  Then $P = Z(c) B$ is a parabolic
subgroup of $G.$ Its Levi-complement is $Z(c)$, and its unipotent radical $R_u(P)$
is given by the roots in $B$ whose supports are not contained in $\Del_1$. So $P\, =\, Z(c) R_u(P)$, and
$G\, =\, KB\, =\, KP$. We agree to write $Z_K(c)$ for $ K \cap Z(c),$ the centralizer of $c$ in $K.$

\begin{lemma}\label{le1}
The manifold $G/Z(c)$ is real analytically a vector bundle
over  $K/Z_K(c)$.
\end{lemma}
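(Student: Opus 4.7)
The plan is to realize $G/Z(c)$ as the associated bundle $K \times_{Z_K(c)} \fn$ with $\fn := {\rm Lie}(R_u(P))$, which would directly exhibit it as a real-analytic vector bundle over $K/Z_K(c)$. The argument rests on two group-theoretic identities: $G = KP$ and $K \cap P = Z_K(c)$. The first is immediate from the Iwasawa decomposition $G = KB$ combined with $B \subseteq P$. The second is the key technical point: $K \cap P$ is a compact subgroup of $P = Z(c)\ltimes R_u(P)$, and since $R_u(P)$ is simply connected unipotent it contains no nontrivial compact subgroup; hence the projection $K \cap P \to P/R_u(P) = Z(c)$ is injective, placing $K \cap P$ inside $K \cap Z(c) = Z_K(c)$, while the reverse inclusion is obvious.

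Next I would consider the real-analytic map
$$
\phi : K \times R_u(P) \longrightarrow G/Z(c), \qquad (k, u) \longmapsto ku Z(c).
$$
Surjectivity uses $G = KP$ together with $P = R_u(P) \cdot Z(c)$ (since $Z(c)$ normalizes $R_u(P)$). For the fibers, if $k_1 u_1 Z(c) = k_2 u_2 Z(c)$ then $\zeta := k_2^{-1}k_1 \in K \cap P = Z_K(c)$, and uniqueness of the $R_u(P)\cdot Z(c)$ decomposition in $P$ forces $u_2 = \Ad(\zeta)(u_1)$. Hence $\phi$ descends to a real-analytic bijection
$$
\bar\phi : K \times_{Z_K(c)} R_u(P) \longrightarrow G/Z(c),
$$
where $Z_K(c)$ acts on $K$ by right translation and on $R_u(P)$ by $\Ad$. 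A dimension count combined with a check that the differential of $\phi$ is surjective at $(e,e)$, using $\fg = \fk + \fz(c) + \fn$ (a consequence of Iwasawa and $\Cartan$-stability of $\fz(c)$ from Lemma \ref{l: centralizer stable}), promotes $\bar\phi$ to a diffeomorphism.

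Finally, because $R_u(P)$ is simply connected unipotent, the exponential map $\exp : \fn \to R_u(P)$ is a $Z(c)$-equivariant real-analytic diffeomorphism intertwining the adjoint action with conjugation. Restricted to $Z_K(c)$ it remains equivariant, and since $Z_K(c)$ acts $\R$-linearly on $\fn$ we obtain
$$
G/Z(c) \;\simeq\; K \times_{Z_K(c)} R_u(P) \;\simeq\; K \times_{Z_K(c)} \fn,
$$
the required real-analytic vector bundle over $K/Z_K(c)$. I expect the main obstacle to be the identification $K \cap P = Z_K(c)$: without it the fibers of $\phi$ would not be controlled by a well-behaved group action and the associated-bundle structure would fail; the remaining steps are a structural unfolding of the semidirect-product decomposition of $P$ and of the unipotency of its radical.
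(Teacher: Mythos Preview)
Your proposal is correct and follows essentially the same route as the paper: both use $G = KP$, the identification $K\cap P = Z_K(c)$, and the exponential diffeomorphism $\fn \to R_u(P)$ to exhibit $G/Z(c)$ as the associated bundle $K\times_{Z_K(c)}\fn$. The paper passes through the intermediate diffeomorphism $K\times_{K\cap P} P \to G$ before quotienting by $Z(c)$, whereas you go directly to $K\times R_u(P)\to G/Z(c)$; and you supply an explicit argument for $K\cap P = Z_K(c)$ (no compact subgroups in the unipotent radical), which the paper uses without further comment---but these are cosmetic differences, not a genuinely different strategy.
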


\begin{proof}
This is a consequence of basic results of Mostow \cite{Mo}.
A direct argument is as follows.

The exponential map induces a holomorphic diffeomorphism from ${\rm Lie}(R_u(P))$ onto $R_u(P) \simeq P/Z(c).$
Accordingly, we equip $P/Z(c)$ with the structure of a complex vector space. As $k \exp X Z(c) = \exp \Ad(k)X  Z(c)$
for $X \in R_u(P)$ and $k \in K \cap P = K \cap Z(c),$ the action of $K \cap P$ on $P/Z(c)$
by left translation is linear for this structure. Accordingly,
$$
K \times_{K\cap P} P/Z(c) \to K/K \cap P = K /Z_K(c)
$$
has the structure of a real analytic vector bundle over $K/Z_K(c).$

The multiplication map induces a surjective and submersive real analytic map
$K \times P \to G,$ which factors
to a submersive real analytic map
$$
K \times_{K \cap P} P \to G.
$$
This map is clearly injective, hence a real analytic diffeomorphism.
Therefore, the induced map
$$
K \times_{K \cap P} P/Z(c) \to G/Z(c)
$$
is a real analytic diffeomorphism as well. It realizes $G/Z(c)$ as a real analytic vector bundle over $K/Z_K(c).$
\end{proof}

\begin{lemma}\label{le2}
Let $\Omega$ be the holomorphic
symplectic form on $G/Z(c)$ defined in (\ref{e: Omega at e}).
The $K$--orbit
through $\bare \,=\, eZ(c)$ is Lagrangian
relative to ${\rm Im}\,\Om$ (respectively,
${\rm Re}\,\Om$) if and only if
all the eigenvalues of $\ad(c)$ are
purely imaginary (respectively, real).
\end{lemma}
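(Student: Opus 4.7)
The plan is to identify the tangent space of the $K$-orbit through $\bare$ inside $T_\bare(G/Z(c))$, verify it has the correct half-dimension to be Lagrangian, and then translate the isotropy condition into an algebraic statement about the decomposition $c = a + \sqrt{-1}\,b$ from the proof of Lemma \ref{l: centralizer stable}.

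First I would observe that the $K$-orbit through $\bare$ is $K/Z_K(c)$, whose tangent space at $\bare$ is the image of $\fk$ in $T_\bare(G/Z(c)) = \fg/\fz(c)$, namely $\fk/(\fk\cap\fz(c))$. By Lemma \ref{l: centralizer stable}, $\fk\cap\fz(c)$ is a real form of the complex Lie algebra $\fz(c)$, so $\dim_\R(\fk\cap\fz(c)) = \dim_\C\fz(c)$; combined with $\dim_\R\fk = \dim_\C\fg$ this yields
\[
\dim_\R K/Z_K(c) \,=\, \dim_\C\fg - \dim_\C\fz(c) \,=\, \dim_\C G/Z(c) \,=\, \tfrac{1}{2}\dim_\R G/Z(c).
\]
Hence the $K$-orbit has the correct dimension to be Lagrangian, and by $K$-invariance of ${\rm Re}\,\Om$ and ${\rm Im}\,\Om$ it suffices to check isotropy at $\bare$.

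Next, with $c = a + \sqrt{-1}\,b$ and $a,b \in \ft\cap\fk$, and using that $[X,Y]\in\fk$ whenever $X,Y\in\fk$, together with the fact that $B$ restricts to a real-valued (negative definite) form on $\fk\times\fk$, formula (\ref{e: Omega at e}) gives
\[
{\rm Re}\,\Om_\bare(X_\bare, Y_\bare) \,=\, B(a, [X,Y]), \qquad {\rm Im}\,\Om_\bare(X_\bare, Y_\bare) \,=\, B(b, [X,Y]).
\]
By $\Ad$-invariance of $B$, one has $B(b, [X,Y]) = B([b,X], Y)$. Since $\fk$ is a compact real form of the semisimple Lie algebra $\fg$, it is itself semisimple with trivial center, and $B$ is non-degenerate on $\fk\times\fk$; consequently the vanishing of $B(b,[\fk,\fk])$ forces $[b,\fk] = 0$, hence $b\in\fz(\fk) = 0$, so $b = 0$. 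The same argument gives the analogous vanishing criterion for $a$.

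To finish, I would convert these vanishing statements into eigenvalue conditions. As in the proof of Lemma \ref{l: centralizer stable}, $\ad a$ and $\ad(\sqrt{-1}\,b)$ commute and admit a common eigenbasis, with purely imaginary and real eigenvalues respectively, so the eigenvalues of $\ad c$ have the form $\beta_j + \sqrt{-1}\,\alpha_j$ with $\alpha_j, \beta_j \in \R$. All eigenvalues are real iff every $\alpha_j$ vanishes iff $\ad a = 0$ iff $a = 0$ (using that $\fg$ is centerless); symmetrically, all eigenvalues are purely imaginary iff $b = 0$. The only delicate point is the implication $B(b,[\fk,\fk]) = 0 \Rightarrow b = 0$, which rests on the semisimplicity — and hence centerlessness — of the compact real form $\fk$.
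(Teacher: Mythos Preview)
Your proof is correct and follows essentially the same route as the paper: decompose $c=a+\sqrt{-1}\,b$ with $a,b\in\ft\cap\fk$, reduce Lagrangian to isotropic via the half-dimension count from Lemma~\ref{l: centralizer stable}, compute ${\rm Im}\,\Omega_{\bare}(X_{\bare},Y_{\bare})=B(b,[X,Y])$ on $\fk$, and then use semisimplicity of $\fk$ to force $b=0$. The only cosmetic difference is that the paper invokes $[\fk,\fk]=\fk$ together with nondegeneracy of $B$ on $\fg$, whereas you rewrite via $\ad$-invariance and use $\fz(\fk)=0$; these are equivalent manifestations of the semisimplicity of $\fk$.
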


\begin{proof}
In Lemma \ref{l: real symplectic forms} we established that $\Re \Omega$ and $\Im \Omega$
are real symplectic forms on $G/Z(c).$ It follows from Lemma \ref{l: centralizer stable}
that $K\bare \simeq K/Z_K(c)$ is a real form for $G/Z(c).$ In particular, $K \bare$ has
half the real dimension of $G/Z(c).$ Hence it suffices to establish
the above assertion with the word Lagrangian replaced by isotropic.

It follows from (\ref{e: Omega at e}) combined with (\ref{e: eta and c}) that the form $\Im \Omega$
is at $\bare = eZ(c)$ given by
$$
\text{Im}(\Omega_{\bare}(X_{\bare}\, , Y_{\bare}))\, =\,\Im B(c\,, \,[X\, ,Y]), \qquad (X,Y \in \fg).
$$
We write $c = a + \sqrt{-1} b$ with $a, b \in \ft \cap \fk,$ as in the proof of Lemma
\ref{l: centralizer stable}.
Since $\bilform$ is real-valued on $\fk,$ it follows that
$$
\text{Im}(\Omega_{\bare}(X_{\bare}\, , Y_{\bare}))\, =\,\bilform(b\,,\,[X\, ,Y]))
$$
for all $X\, , Y\, \in\, {\mathfrak k}$.

If $K\bare$ is isotropic, then taking into account that $[\fk\, , \fk]\,=\, \fk$, we
see that $\bilform (Z, b) = 0$ for all $Z \in \fk,$ and hence also for all $Z \in \fg = \fk^{\C}.$
It follows that $b = 0.$ Hence $c = a \in \ft \cap \fk$ and it follows that
the eigenvalues of $\ad (c)$ are all purely imaginary.

Conversely, assume that all eigenvalues of $\ad(c)$ are purely imaginary. Then $c \in \ft\cap \fk,$
so that $b = 0.$ It follows that $K$ is isotropic at the point $\bare.$ By invariance,
$K$ is isotropic everywhere.  This completes proof of the result involving $\Im \Omega.$ The proof
for $\Re \Omega$ is similar.
\end{proof}

\begin{lemma}\label{le3}
The $K$-orbit of $\bare\, =\, eZ(c)$ is Lagrangian with respect
to ${\rm Im}\,\Omega$ if and only if the form ${\rm Im}\, \Om$
is exact.
\end{lemma}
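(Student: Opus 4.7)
The plan is to combine the vector-bundle structure of Lemma~\ref{le1} with a short averaging argument on the compact base. By Lemma~\ref{le1}, the inclusion $\iota : K/Z_K(c) \hookrightarrow G/Z(c)$ realizes the left-hand side as the zero section of a real-analytic vector bundle, and is therefore a homotopy equivalence. Consequently the pull-back map
$$
\iota^{*} : H^{2}_{\rm dR}(G/Z(c)) \longrightarrow H^{2}_{\rm dR}(K/Z_K(c))
$$
is an isomorphism, and the closed two-form $\Im \Omega$ on $G/Z(c)$ is exact if and only if its restriction $\iota^{*} \Im \Omega$ to $K\bare \simeq K/Z_K(c)$ is exact as a differential two-form.

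First I would dispatch the easy direction: if $K\bare$ is Lagrangian for $\Im \Omega$, then $\iota^{*} \Im \Omega = 0$, which is trivially exact, and the cohomological isomorphism above forces $\Im \Omega$ itself to be exact.

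For the converse, suppose $\Im \Omega$ is exact. Then $\iota^{*} \Im \Omega = d\beta$ for some 1-form $\beta$ on the compact manifold $K/Z_K(c)$. Since $\iota^{*} \Im \Omega$ is $K$-invariant and $K$ is compact, averaging $\beta$ against normalized Haar measure produces a $K$-invariant 1-form $\tilde\beta$ with $d\tilde\beta = \iota^{*} \Im \Omega$. The crucial observation is that the space of $K$-invariant 1-forms on $K/Z_K(c)$ vanishes: any such form is determined by its value at $\bare$, which is a $Z_K(c)$-invariant element of $(\fk/(\fk\cap\fz(c)))^{*}$; and since $\ft\cap\fk \subset \fk\cap\fz(c)$ acts on $\fk/(\fk\cap\fz(c))$ through the non-trivial restrictions of those roots $\alpha$ with $\alpha(c)\ne 0$, no non-zero invariants exist. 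Hence $\tilde\beta = 0$, so $\iota^{*} \Im \Omega = 0$, and $K\bare$ is isotropic for $\Im \Omega$. Since Lemma~\ref{le2} already records that $K\bare$ has half the real dimension of $G/Z(c)$, it is in fact Lagrangian.

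I expect the converse direction to be the main obstacle, specifically the vanishing of $K$-invariant 1-forms on $K/Z_K(c)$. This rests on the maximal-rank inclusion $\ft\cap\fk \subset \fk\cap\fz(c)$, which follows from the choice of $T$ with $c\in\ft$, together with the fact that $\ft = (\ft\cap\fk)\oplus\sqrt{-1}(\ft\cap\fk)$ ensures each root of $T$ remains non-trivial upon restriction to $\ft\cap\fk$.
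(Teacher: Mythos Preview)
Your proposal is correct and follows essentially the same route as the paper: reduce to the restriction on $K/Z_K(c)$ via the homotopy equivalence of Lemma~\ref{le1}, then average a primitive to a $K$-invariant $1$-form and kill it using that $\ft\cap\fk\subset Z_K(c)$ acts with no trivial weights on $\fk/(\fk\cap\fz(c))$. The paper packages this last step as a separate general lemma (Lemma~\ref{le.re}, for arbitrary $K/H$ with $H$ containing a maximal torus) and in its proof pulls back further to $K/T_0$, whereas you argue directly on $K/Z_K(c)$; since $Z_K(c)$ already contains the maximal torus $T\cap K$, your shortcut is legitimate and the two arguments are otherwise identical.
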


\begin{proof}
As in the proof of the previous lemma, it suffices to prove the assertion
with the word Lagrangian replaced by isotropic.

By Lemma \ref{le1},
the $G$-orbit  $G\bare \simeq G/L$ can be retracted onto the $K$-orbit
of $K\bare \simeq K/K\cap L$. Hence, the inclusion $K\bare \to G/Z(c)$ induces an isomorphism
on de Rham cohomology. Therefore, the closed form $\om \,=\,\text{Im}\,\Om$
is exact if and only if its restriction to $K \bare$ is exact.
Now the lemma
is a consequence of the following more general result.
\end{proof}

\begin{lemma}\label{le.re}
Let $K$ be a compact Lie group and $H\, \subset\, K$ a compact
subgroup containing a maximal torus of $K$. Let $\omega$ be a
$K$--invariant closed two--form on $K/H$. Then $\omega$ is
exact if and only if $\omega\, =\, 0$.
\end{lemma}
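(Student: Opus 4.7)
The forward implication is trivial, so I focus on the converse: a closed, $K$-invariant, exact two-form on $K/H$ must vanish.

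The plan is a standard averaging argument followed by a weight computation. Writing $\omega = d\alpha$ for some one-form $\alpha$ on $K/H$, I average over $K$ with respect to normalised Haar measure and form
\[
\beta \; := \; \int_K k^*\alpha \, dk.
\]
Then $\beta$ is $K$-invariant, and since $\omega$ is already $K$-invariant and $d$ commutes with integration, $d\beta = \int_K k^* d\alpha \, dk = \int_K k^*\omega \, dk = \omega$. It therefore suffices to prove that every $K$-invariant one-form on $K/H$ vanishes identically.

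Such a one-form is determined by its value at $\bare = eH$, which corresponds to an element of $(\fk/\mathfrak{h})^*$ that is invariant under the isotropy action of $H$. Fixing a $K$-invariant inner product on $\fk$ and identifying $(\fk/\mathfrak{h})^*$ with the orthogonal complement $\mathfrak{h}^\perp \subset \fk$, I must show $(\mathfrak{h}^\perp)^H = 0$. Since $H \supset T$, this invariant subspace is contained in $\fk^T$, and the standard structural fact that a maximal torus of a compact Lie group is self-centralizing gives $\fk^T = \ft \subset \mathfrak{h}$. Hence $\mathfrak{h}^\perp \cap \fk^T = 0$, so $\beta = 0$, and therefore $\omega = d\beta = 0$. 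The only nontrivial ingredient is the identity $\fk^T = \ft$, which is exactly where the hypothesis $T \subset H$ plays its decisive role; beyond this there is no substantial obstacle.
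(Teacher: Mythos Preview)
Your proof is correct and follows essentially the same approach as the paper: average the primitive to obtain a $K$-invariant one-form, then use $T$-invariance of its value at the basepoint together with the self-centralizing property of a maximal torus to conclude that this one-form vanishes. The paper's only cosmetic difference is that it first pulls back along $\pi: K/T_0 \to K/H$ and phrases the vanishing via root vectors rather than invoking $\fk^T = \ft$ directly; your version is slightly more streamlined.
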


\begin{proof}
We need to show that if $\omega$ is exact, then
$\omega$ is identically zero.

Assume that $\omega\, =\, d\eta$. By integrating the left-translates $l_k^*\eta$ over $k \in K$
with respect to the Haar measure on $K$ of total volume $1$,
we may assume that the form $\eta$ is also $K$--invariant.

Let $T_0$ be a maximal torus of $K$ contained in $H$ and consider the natural fibration
$
\pi\, :\, K/T_0\, \longrightarrow\, K/H\, .
$
The pull back
$$
\widetilde{\eta}\, :=\, \pi^*\eta
$$
is a $K$--invariant one--form.
Let $\bare \, =\,
eT$. Then the evaluation $\widetilde{\eta}(\bare)$ is an
$\text{Ad}(T_0)$--invariant linear functional on the tangent
space $T_{\bare}(K/T_0)$. Its complex linear extension is therefore an
$\text{Ad}(T_0)$--invariant $\mathbb C$--linear
functional on the complexification
$T_{\bare}(K/T_0)\otimes_{\mathbb R} {\mathbb C}$.

A basis for this  complexification
is given by the canonical images of  root vectors $\{X_\alpha\}_{\alpha\in R}$,
where $R$ is a system of roots of $K^{\mathbb C}$ relative
to $T_0^{\mathbb C}$. The
$\text{Ad}(T_0)$--invariance of
$\widetilde{\eta}(\bare)$ implies that
$\widetilde{\eta}(\bare)\, =\,0$ on each of these root vectors, hence on $T_{\bare}(K/T_0).$
By $K$-invariance, it follows that $\widetilde{\eta}\, =\,
0$. Since $\pi$ is a surjective submersion, this in turn implies that
$\eta\, =\, 0$.
\end{proof}

Lemma \ref{le2} and Lemma \ref{le3} together complete
the proof of Theorem \ref{t2a}.
In view of Lemma \ref{l: real symplectic forms} this completes the proof of Theorem \ref{t2} (a).
For the remaining part of the proof of Theorem \ref{t2}, the following observation will be
of fundamental importance.

\begin{lemma}
\label{l: fibers isotropic}
The fibers of the fibration $\psi: G/Z(c) \to G/P$ are isotropic for the holomorphic
symplectic form $\Omega.$
\end{lemma}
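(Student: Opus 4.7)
The plan is to exploit the $G$-invariance of $\Omega$ together with the Levi decomposition of $P$ in order to reduce the claim to a short root-theoretic computation at the base point $\bare.$

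First I would observe that $\psi\colon G/Z(c) \to G/P$ is $G$-equivariant and that $G$ acts transitively on $G/Z(c),$ so every fiber is a $G$-translate of the fiber through $\bare$ and it suffices to prove isotropy of that fiber at the single point $\bare.$ Under the standard identification $T_{\bare}(G/Z(c)) \cong \fg/\fz(c),$ the tangent space at $\bare$ to the fiber $P/Z(c)$ is the image of ${\rm Lie}(P);$ the Levi decomposition ${\rm Lie}(P) = \fz(c) \oplus {\rm Lie}(R_u(P))$ shows that this tangent space is canonically represented by elements of ${\rm Lie}(R_u(P)).$

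Next, given $X, Y \in {\rm Lie}(R_u(P)),$ formulas (\ref{e: Omega at e}) and (\ref{e: eta and c}) give
\[
\Omega_{\bare}(X_{\bare}, Y_{\bare}) = B(c, [X, Y]).
\]
Since ${\rm Lie}(R_u(P))$ is a Lie subalgebra, $[X, Y]$ again lies in ${\rm Lie}(R_u(P)),$ which by the choice of $\Del$ and $\Del_1$ is the direct sum of root spaces $\fg_\alpha$ where $\alpha$ ranges over the positive roots not supported on $\Del_1;$ in particular, each such $\alpha$ is non-zero. Because $c \in \ft$ and $B(\ft, \fg_\alpha) = 0$ whenever $\alpha \neq 0$ (a standard consequence of the $\ad$-invariance of the Killing form), I conclude $B(c,[X,Y]) = 0,$ so the fibers are isotropic.

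I do not foresee any serious obstacle. The only minor point that needs attention is that the right-hand side $B(c, [X, Y])$ be genuinely well-defined on $\fg/\fz(c),$ but this is immediate from the $\ad$-invariance of $B:$ for $z \in \fz(c)$ one has $B(c, [z, Y]) = B([c, z], Y) = 0,$ so replacing $X$ by $X + z$ does not change the value. Everything else is bookkeeping.
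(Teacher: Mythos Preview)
Your proposal is correct and follows essentially the same route as the paper: reduce by $G$-invariance to the fiber through $\bare,$ identify its tangent space with ${\rm Lie}(R_u(P)),$ and then kill $B(c,[X,Y])$ using that $[X,Y]$ lies in a sum of nonzero root spaces. The only cosmetic difference is that you invoke the orthogonality $B(\ft,\fg_\alpha)=0$ for $\alpha\neq 0$ as a known fact, whereas the paper verifies it inline via the $\Ad(T)$-invariance identity $B(c,[X,Y]) = t^\alpha B(c,[X,Y]).$
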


\begin{proof}
Put $\bare : = e Z(c).$
By $G$-invariance, it suffices to show that $\Omega_{\bare}$ vanishes on the tangent
space at $\bare$ to the fiber $\psi^{-1}(eP) = PZ(c) = R_u(P)\bare.$
In view of (\ref{e: Omega at e}) and (\ref{e: eta and c}) it suffices to show that
$$
\bilform(c\,,\, [X,Y]) = 0
$$
for all $X,Y \in {\rm Lie}(R_u(P)).$ By linearity it suffices to prove
this identity for $X,Y$ contained in root spaces of $R_u(P).$
If $[X,Y] = 0,$ the identity is trivially valid, so we may assume $[X,Y] \neq 0.$
Then $[X, Y]$ is contained in a root space for a root $\ga$ of $P.$ Let $t \in T$ be such
that $t^\ga \neq 1.$ Then by $G$-invariance of $B,$
$$
B(c\,,\,[X,Y]) = B(\Ad(t^{-1})c\,,\,[X,Y]) = B(c\,,\,\Ad(t)[X,Y]) =
t^{\ga} B(c\,,\, [X,Y]).
$$
The lemma follows.
\end{proof}

The rest of the paper will be devoted to the proof of Theorem \ref{t2} (b), or rather its
generalization to the setting of real semisimple Lie algebras.
We will proceed under the assumption that all eigenvalues of $\ad(c)$ are real. The case with
all eigenvalues purely imaginary is treated similarly. Thus, $\Re \Omega$ is a real symplectic form
on $G/Z(c)$ and $K/Z_K(c)$ is a Lagrangian submanifold for this form.
Moreover, by Lemma \ref{l: fibers isotropic} the fibers of the fibration
$G/Z(c) \to G/P$ are Lagrangian for $\Re \Omega.$

In order to facilitate the comparison with the theory of real semisimple Lie algebras, we
make a few more remarks about the real Lie algebra $\fg_\R,$ see the proof of
Lemma \ref{l: real symplectic forms}. This algebra  has a real Killing form
which we denote by $B_\R.$

\begin{lemma}
As maps $\;\fg \times \fg \to \C,$ the Killing forms $B$ and $B_\R$ are related by $B_\R = 2 \Re B.$
\end{lemma}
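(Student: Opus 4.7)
The plan is to reduce the identity to a general fact relating the complex trace of a $\C$-linear endomorphism to the real trace of its underlying $\R$-linear endomorphism. Since the adjoint representation $\ad: \fg \to \End_\C(\fg)$ is $\C$-linear, for any $X,Y \in \fg$ the operator $\ad(X)\ad(Y)$ is a $\C$-linear endomorphism of $\fg.$ Viewing $\fg$ as $\fg_\R,$ the same operator becomes an $\R$-linear endomorphism of $\fg_\R,$ and it is precisely $\ad_\R(X)\ad_\R(Y),$ where $\ad_\R$ denotes the adjoint representation of $\fg_\R.$ By definition, $B(X,Y) = \Tr_\C(\ad(X)\ad(Y))$ and $B_\R(X,Y) = \Tr_\R(\ad_\R(X)\ad_\R(Y)).$

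Thus it suffices to prove the following linear-algebra lemma: for any finite-dimensional complex vector space $V$ and any $\C$-linear map $T:V\to V,$ one has $\Tr_\R(T_\R) = 2\Re \Tr_\C(T),$ where $T_\R$ denotes $T$ regarded as an $\R$-linear map on $V_\R.$ To see this, pick a $\C$-basis $v_1,\ldots,v_n$ of $V,$ and write $Tv_j = \sum_i a_{ij} v_i$ with $a_{ij} \in \C.$ Then $v_1, \sqrt{-1}v_1,\ldots, v_n,\sqrt{-1}v_n$ is an $\R$-basis of $V_\R,$ and in this basis the matrix of $T_\R$ is block diagonal-plus-off-diagonal with $2\times 2$ blocks $\begin{pmatrix}\Re a_{ij} & -\Im a_{ij}\\ \Im a_{ij} & \Re a_{ij}\end{pmatrix}.$ The diagonal blocks contribute $2\Re a_{ii}$ each to $\Tr_\R(T_\R),$ yielding the claimed equality.

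Applying the lemma to $T = \ad(X)\ad(Y),$ we obtain $B_\R(X,Y) = 2\Re B(X,Y),$ as desired. The only real content here is the trace identity; the rest is bookkeeping about the fact that $\ad$ factors through $\End_\C(\fg),$ which is immediate since the Lie bracket on $\fg$ is $\C$-bilinear. There is no substantial obstacle to the proof.
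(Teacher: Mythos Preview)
Your proof is correct and follows essentially the same approach as the paper: both reduce the identity to the linear-algebra fact $\Tr_\R A = 2\Re \Tr_\C A$ for a $\C$-linear endomorphism $A$, and then apply it to $A = \ad(X)\ad(Y)$. The paper merely asserts this trace identity as ``straightforward to check,'' whereas you supply the explicit basis computation; otherwise the arguments are identical.
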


\begin{proof}
Let $A: \fg \to \fg$ be a complex linear map. Its complex trace is denoted by $\Tr_\C A.$
At the same time $A$ defines a real linear endomorphism of $\fg_\R.$ As such, its trace is
denoted by $\Tr_\R A.$ It is straightforward to check that $\Tr_\R A = 2 \Re \Tr_\C A.$
Hence, for $X, Y \in \fg$ we have $B_\R (X,Y) = \Tr_\R (\ad(X)\circ \ad(Y)) =
2 \Re \Tr_\C (\ad(X)\circ \ad(Y)) = 2 \Re B(X,Y).$
\end{proof}

If $\gl \in \fg_\R^*$ we denote by $X_\gl$ the dual of $\gl$ relative to $B_\R,$
i.e.,  $\gl(Y) = B_\R(X_\gl, Y)$ for all $Y \in \fg_\R.$

\begin{lemma}
\label{l: compare c and X}
$c_\eta = 2 X_{\Re \eta}.$
\end{lemma}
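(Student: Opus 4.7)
The plan is to unwind the three defining relations: the definition of $c_\eta$ through the complex Killing form, the definition of $X_{\Re\eta}$ through the real Killing form, and the identity $B_\R = 2\,\Re B$ just established in the preceding lemma. The result will drop out by the non-degeneracy of $B_\R$ on $\fg_\R$.

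Concretely, I would start from the defining relation $\eta(Y) = B(c_\eta, Y)$ for all $Y \in \fg$, take real parts, and apply the preceding lemma to get
\[
\Re\eta(Y) \;=\; \Re B(c_\eta, Y) \;=\; \tfrac{1}{2}\, B_\R(c_\eta, Y),
\]
valid for all $Y \in \fg_\R$. On the other hand, by definition of $X_{\Re\eta}$,
\[
\Re\eta(Y) \;=\; B_\R\!\left(X_{\Re\eta},\, Y\right).
\]
Comparing these two identities gives $B_\R\!\left(c_\eta - 2X_{\Re\eta},\, Y\right) = 0$ for every $Y \in \fg_\R$, and since $\fg$ is semisimple, $B_\R$ is non-degenerate on $\fg_\R$, forcing $c_\eta = 2 X_{\Re\eta}$.

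There is no real obstacle here; the only point that deserves a moment's care is making sure the factor of $2$ lands on the correct side. The factor comes entirely from the relation $B_\R = 2\,\Re B$ of the previous lemma, and the non-degeneracy step uses that $\fg_\R$ is a real semisimple Lie algebra (equivalently, that $B$ is non-degenerate as a complex bilinear form, which forces $\Re B$, hence $B_\R$, to be non-degenerate as a real bilinear form on $\fg_\R$). Both are immediate from the set-up, so the lemma is essentially a bookkeeping statement that will be used later to translate between the complex formalism (adjoint orbits through $c$) and the real formalism (coadjoint orbits through $\Re\eta$ for $G$ viewed as a real semisimple Lie group).
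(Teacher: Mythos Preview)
Your proof is correct and is essentially identical to the paper's own argument: both unwind the definitions of $c_\eta$ and $X_{\Re\eta}$, invoke the identity $B_\R = 2\,\Re B$ from the preceding lemma, and conclude by non-degeneracy of $B_\R$. The only difference is cosmetic ordering of the chain of equalities.
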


\begin{proof}
For every $Y \in  \fg$ we have
$$
B_\R(2 X_{\Re \eta},Y) = 2 \Re \eta(Y) = 2 \Re B(c_\eta, Y) = B_\R(c_\eta, Y).
$$
The result now follows from the non-degeneracy of $B_\R.$
\end{proof}
We assumed that all eigenvalues of $c = c_\eta$ are real.
Because of Lemma
\ref{l: compare c and X}
it follows that the element $X_{\Re \gl}$ is real hyperbolic in the real semisimple Lie algebra
$\fg_\R,$ in the sense of Section \ref{sect4}. Let $B$ be a Borel subgroup of $G$
containing $T$ and such that the roots of $R_u(B)$ are non-negative on $c.$
Then the parabolic subgroup $P = Z(c)B$ corresponds to the parabolic subgroup $P(\Re \eta)$ introduced in
Section \ref{sect4}.
Therefore, the results of that section apply  to the present setting. In particular, the following
result is a special case of Theorem \ref{t: real Lagrangian fibration}.

\begin{theorem}
\label{t: complex Lagrangian fibration}
Let $\eta \in \fg^*$ be such that $c = c_\eta$ belongs to ${\rm Lie T}$ and such that
${\rm ad}\, (c)$ has real eigenvalues. Then the projection
$$
G/Z(c) \,\longrightarrow\, G/P
$$
is a Lagrangian fibration with Lagrangian section
$K/Z_K(\eta)$ relative to the symplectic form ${\rm Re}\, \Omega_{\eta}$.
Moreover, there exists a unique symplectic isomorphism from this fibration
onto the cotangent fibration $T^*(G/P)\,\longrightarrow\, G/P$ equipped
with the Liouville symplectic form, mapping $K/Z_K(\eta)$ to
the zero section.
\end{theorem}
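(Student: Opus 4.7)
The plan is to derive Theorem \ref{t: complex Lagrangian fibration} as a direct specialization of its real counterpart, Theorem \ref{t: real Lagrangian fibration}, by regarding $G$ as a real semisimple Lie group with Lie algebra $\fg_\R$. The bridge between the two settings is Lemma \ref{l: compare c and X}: under $B_\R : \fg_\R \to \fg_\R^*$, the adjoint orbit $\Ad(G)c$ corresponds to the coadjoint orbit through $\Re\eta$, and the hypothesis that $\ad(c)$ has real eigenvalues becomes, via $c = 2 X_{\Re \eta}$, the statement that $X_{\Re\eta}$ is real hyperbolic in $\fg_\R$. Moreover, Lemma \ref{l: real symplectic forms} has already identified $Z(c) = Z(\eta) = Z(\Re\eta)$ and shown that $\Re\Omega$ is the Kostant--Kirillov form associated with the coadjoint orbit through $\Re\eta$ in $\fg_\R^*$.

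The next step is to verify that the remaining ingredients match. The decomposition (\ref{e: real form g}) exhibits $\fk$ as a real form of $\fg$, and the associated conjugation $\theta$ is a Cartan involution of $\fg_\R$, so $K$ is a maximal compact subgroup of $G$ viewed as a real Lie group. With $\Del$ chosen so that the roots of $R_u(B)$ are non-negative on $c$ (and hence on $X_{\Re \eta}$), the parabolic $P = Z(c)B$ coincides with the parabolic $P(\Re\eta)$ built from the real hyperbolic element $X_{\Re\eta}$ in Section \ref{sect4}. The Lagrangian submanifold $K/Z_K(c)$ furnished by Lemma \ref{le2} is then the same real submanifold as the $K$-orbit through $\Re\eta$ appearing in the real theorem, and Lemma \ref{l: fibers isotropic} guarantees that the fibers of $\psi : G/Z(c) \to G/P$ are isotropic (hence Lagrangian by a dimension count against $K/Z_K(c)$) for $\Re\Omega$.

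Once these dictionaries are in place, Theorem \ref{t: complex Lagrangian fibration} follows by directly invoking Theorem \ref{t: real Lagrangian fibration} applied to the real data $(\fg_\R, X_{\Re\eta}, K, P(\Re\eta))$. This yields in one stroke the Lagrangian fibration structure for $\Re\Omega$, the Lagrangian section $K/Z_K(\eta)$, and the unique symplectomorphism onto $T^*(G/P)$ with the Liouville form sending $K/Z_K(\eta)$ to the zero section. The substantive difficulty --- constructing global coordinates along the fibers of $\psi$ by integrating the commuting vertical vector fields, and proving the resulting fiber-preserving map is globally defined and symplectic --- resides in the proof of Theorem \ref{t: real Lagrangian fibration} and not here; in the present theorem the only work is the bookkeeping that the complex data $(\eta, c, Z(c), P, K)$ correspond to the real data $(\Re\eta, X_{\Re\eta}, Z(\Re\eta), P(\Re\eta), K)$ under $B_\R$. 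The main obstacle in that bookkeeping is the compatibility of the parabolic subgroups, which is handled by arranging the simple system $\Del$ so that the positivity conditions on $c$ and on $X_{\Re\eta}$ coincide.
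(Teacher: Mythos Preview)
Your proposal is correct and follows essentially the same route as the paper: the paper derives Theorem~\ref{t: complex Lagrangian fibration} as a direct special case of Theorem~\ref{t: real Lagrangian fibration} by passing to $\fg_\R$, using Lemma~\ref{l: compare c and X} to see that $X_{\Re\eta}$ is real hyperbolic, identifying $\Re\Omega_\eta$ with the real Kostant--Kirillov form via Lemma~\ref{l: real symplectic forms}, and arranging the Borel so that $P = Z(c)B$ coincides with $P(\Re\eta)$. Your write-up is in fact more explicit about this bookkeeping than the paper's own text, which simply asserts that ``the results of that section apply to the present setting.''
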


Theorem \ref{t2} (b) follows from this result.

\section{Background in symplectic geometry}\label{sect3}
In this section we will discuss some background from symplectic geometry.
Let $M$ be a smooth manifold, and let $\pi\, :\,Z \,\longrightarrow\,M$ be a fiber bundle
whose total space $Z$ is equipped with a symplectic form
$\Omega.$ The bundle $\pi$ is called \textit{Lagrangian}
if for each point $x \,\in\, M$ the fiber $\pi^{-1}(x)$ is a Lagrangian submanifold of $Z$. A section
$$
s\,:\, M \,\longrightarrow\, Z
$$
is said to be \textit{Lagrangian} if the image $s(M)$ is a
Lagrangian submanifold of $Z$. If $\pi: Z \to M$ is Lagrangian, then by application
of the Darboux theorem, it follows that for any point $z_0 \in Z$
there exists a Lagrangian section $s$ of $Z$
locally defined in a neighborhood of $m_0 = \pi(z_0)$
and with $s(m_0) = z_0.$

The following result is well known in basic symplectic geometry and can be found in
\cite{ArnoldGivental}, Sect.\ 4.2. See also \cite{Weinstein}, where the result is established in the context
of Banach manifolds, with a useful review of the finite dimensional case.
A manifold $M$ will be identified with a submanifold of its cotangent bundle
$T^* M$ through the zero section.

\begin{theorem}\label{thb1}
Let $\pi\, :\,Z \,\longrightarrow\,M$ be a fiber bundle whose total space $Z$
is equipped with a symplectic form $\Omega.$
Assume that:
\begin{enumerate}
\item[{\rm (1)}]
$\pi$ has Lagrangian fibers;
\item[{\rm (2)}]
$\pi$ admits a Lagrangian section $s$.
\end{enumerate}
Let $p\, : \, T^*M \,\longrightarrow\,  M$ be the cotangent bundle of
$M$ equipped with the Liouville symplectic structure $\sigma$.
Then there exists an open neighborhood $U$ of $M$ in $T^*M$ and
embedding $\varphi\,:\, U \,\longrightarrow\, Z$
such that
\begin{enumerate}
\item[{\rm (a)}]
$\pi \circ \varphi \,= \, p$ on $U$;
\item[{\rm (b)}]
$\varphi  \,= \, s$ on $M$;
\item[{\rm (c)}]
$\varphi^*(\Omega) \,= \,\sigma$.
\end{enumerate}
If $\gf'\,:\, U' \,\longrightarrow\, T^*M$ is a second such embedding,
then $\gf' \,= \,\gf$ on an open neighborhood of $M$ in $U \cap U'$
\end{theorem}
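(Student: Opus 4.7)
My plan is to construct $\varphi$ in two stages: first, produce a fiber-preserving diffeomorphism $\Psi$ from a neighborhood of the zero section in $T^*M$ onto a neighborhood of $s(M)$ in $Z$ with the correct first-order behavior along $M$; then apply a Moser-type deformation, carefully adapted to preserve the fibration, to upgrade $\Psi$ into a symplectomorphism. To set up the first stage, I would use the Lagrangian-fiber and Lagrangian-section hypotheses to identify the vertical bundle of $\pi$ along $s(M)$ with $T^*M$: for $z = s(m)$, the subspaces $V_z := \ker d\pi_z$ and $T_z s(M)$ are both Lagrangian (by (1) and (2) respectively) and transverse, so $\Omega_z$ pairs them nondegenerately and produces a canonical isomorphism $V_z \cong (T_z s(M))^* \cong T_m^* M$. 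Bundling these over $m \in M$ yields a vector bundle isomorphism $\Phi: T^*M \to V|_{s(M)}$. I would then choose any Riemannian metric on $V$ and compose $\Phi$ with the resulting fiberwise exponential map along $s(M)$ to get a smooth diffeomorphism $\Psi: U_0 \to Z$ satisfying $\pi \after \Psi = p$ and $\Psi|_M = s$, with derivative along $M$ restricting to $\Phi$ on the vertical subbundle. Setting $\Omega' := \Psi^*\Omega$, a pointwise check at each $m \in M$ then gives $\Omega'|_M = \sigma|_M$: both forms annihilate pairs from $T_m M$ (Lagrangian section), annihilate pairs from $T_m^* M$ (Lagrangian fiber), and realize the canonical pairing $(v,\xi) \mapsto \xi(v)$ between $T_m M$ and $T_m^* M$ by the definition of $\Phi$.

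Next, I would run a Moser deformation, taking care that it preserves the fibration $p$. Set $\alpha := \Omega' - \sigma$; this is a closed $2$-form vanishing on $M$, and its restriction to every fiber of $p$ is zero, since those fibers are Lagrangian for both $\sigma$ and $\Omega'$. Let $E$ denote the Euler vector field on $T^*M$ and $h_t(\xi) := t\xi$ the fiberwise radial contraction, and define the fiberwise Poincar\'e primitive
$$
\beta := \int_0^1 \tfrac{1}{t}\, h_t^*(i_E \alpha)\, dt,
$$
a smooth $1$-form on $U_0$ vanishing on $M$ and satisfying $d\beta = \alpha$. The key observation is that because $E$ is $p$-vertical, $h_t$ preserves fibers of $p$, and $\alpha$ annihilates pairs of $p$-vertical vectors, $\beta$ itself annihilates every $p$-vertical vector. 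Now put $\sigma_t := (1-t)\sigma + t\Omega'$, nondegenerate on a common neighborhood of $M$ for all $t \in [0,1]$, and define $X_t$ by $i_{X_t}\sigma_t = -\beta$. Since the $p$-vertical subbundle is Lagrangian for $\sigma_t$ and $\beta$ annihilates it, $X_t$ is itself $p$-vertical; its flow $\phi_t$ therefore preserves fibers of $p$, fixes $M$ pointwise (as $\beta|_M = 0$ forces $X_t|_M = 0$), and satisfies $\phi_1^*\Omega' = \sigma$ by the standard Moser identity. The map $\varphi := \Psi \after \phi_1$, defined on a sufficiently small neighborhood $U$ of $M$, then satisfies (a), (b), and (c).

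For uniqueness, suppose $\varphi'$ is a second such embedding. Then $\chi := \varphi^{-1}\after \varphi'$ is a $p$-fiber-preserving symplectomorphism of a neighborhood of $M$ in $(T^*M,\sigma)$ fixing $M$ pointwise. Writing $\sigma = -d\lambda$ for the tautological $1$-form $\lambda$ and computing in local cotangent coordinates shows that every fiber-preserving symplectomorphism of $T^*M$ over $\mathrm{id}_M$ is fiberwise translation by a closed $1$-form on $M$; the requirement $\chi|_M = \mathrm{id}$ then forces this form to vanish, so $\chi = \mathrm{id}$ near $M$. The main obstacle, in my view, lies in the Moser step: the primitive $\beta$ must simultaneously solve $d\beta = \alpha$ with $\beta|_M = 0$ \emph{and} annihilate $p$-vertical vectors, so that the Moser vector field remains tangent to the fibers of $p$. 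The fact that the fibers are Lagrangian for both $\sigma$ and $\Omega'$, combined with the fiberwise radial homotopy operator, is precisely what produces such a $\beta$.
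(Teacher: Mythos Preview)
Your proof is correct, but it proceeds by a genuinely different route from the paper's.

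The paper constructs $\varphi$ \emph{canonically}, without any auxiliary choices. For $\eta \in T_x^*M$ it defines a vector field $H_\eta$ on the fiber $\pi^{-1}(x)$ by the formula $\Omega_z(X, H_\eta(z)) = \eta(d\pi(z)X)$, shows via the interpretation $H_\eta = H_f|_{\pi^{-1}(x)}$ (with $f = \pi^*\bar f$, $d\bar f(x) = \eta$) that any two such fields commute, and then sets
\[
\varphi(\eta) = e^{H_\eta}\, s(p(\eta)).
\]
Symplecticity is proved by producing explicit Darboux coordinates $(f_i, g_j)$ on $Z$ near $s(M)$ with $\Omega = \sum df_i \wedge dg_i$, and checking that $\varphi$ pulls these back to canonical coordinates on $T^*M$. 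Uniqueness is obtained by showing that any $\varphi$ satisfying (a)--(c) intertwines the analogous fields $\widetilde H_\eta$ on $T^*M$ with the $H_\eta$ on $Z$, and hence must be given by the flow formula.

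Your approach is instead the Moser paradigm: an initial fiber-preserving $\Psi$ (built from a choice of fiber metric) matching $\Omega$ and $\sigma$ to first order along $M$, followed by a deformation whose primitive $\beta$ is engineered---via the fiberwise radial homotopy operator and the fact that both forms have Lagrangian fibers---to annihilate vertical vectors, so that the Moser flow stays tangent to the fibers. Your uniqueness argument, classifying fiber-preserving symplectomorphisms of $T^*M$ over $\mathrm{id}_M$ as translations by closed $1$-forms, is also different from (and in a sense slicker than) the paper's flow-intertwining argument.

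Both arguments are valid for Theorem~\ref{thb1} as stated. What the paper's route buys is the explicit formula $\varphi(\eta) = e^{H_\eta} s(p(\eta))$: this is the whole point of the paper's presentation, since in the Lie-theoretic setting of Section~\ref{sect4} the vector fields $H_\eta$ can be written down concretely on $N(\lambda)$ and shown to have \emph{complete} flows, upgrading the local statement to the global symplectomorphism of Theorem~\ref{t: real Lagrangian fibration}. Your Moser construction, while perfectly adequate here, does not by itself suggest how to globalize.
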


Although this result is well known,
we include a proof to prepare for our later arguments leading to
the proof of Theorem \ref{t: real Lagrangian fibration}, see also
Theorem \ref{t: complex Lagrangian fibration}. The point is that
there is a canonical way to define the map $\gf.$

We agree to write $n$ for the dimension
of $M$. Then $s(M)$ is a submanifold of $Z$ of dimension $n$. Since this
submanifold is Lagrangian, the dimension of $Z$ must be $2n$.
The fibers of $\pi$ have dimension $n.$

Let $x \in M$ and $\eta \in T_x^*M.$ For each $z \in \pi^{-1}(x)$ we define
a vector $H_\eta(z) \in T_zZ$ by the requirement that
\begin{equation}
\label{e: defi H eta in general}
\Omega_z(X, H_\eta(z)) = \eta(d\pi(z)X),\qquad \forall X \in T_zZ.
\end{equation}
Since $d\pi(z) = 0$ on $T_z\pi^{-1}(x),$ which is a Lagrangian subspace
of $T_z Z,$ it follows that $H_\eta(z)$ belongs to this Lagrangian subspace.
Hence $H_\eta(z)$ is tangent to the fiber $\pi^{-1}(x)$ at any of its points $z.$
Accordingly, $H_\eta$ will be viewed as an element of $\Vect(\pi^{-1}(x)),$ the space
of vector fields on $\pi^{-1}(x).$

We will use the flows of these vector fields to define $\gf.$ The motivation
for the above definition is the following relation to Hamilton vector fields of functions
that are constant along the fibers of $\pi.$

\begin{lemma}
\label{l: H eta equals H f}
Let $x \in M,$ $\eta \in T_x^*M$ and let $\bar f: M \to \R$ be a smooth function such that
$d\bar f(x) = \eta.$ Let $f = \pi^*(\bar f)$ and let $H_f$ be the associated Hamilton vector field.
Then
$$
 H_f = H_\eta   \qquad \text{on}\quad \pi^{-1}(x).
$$
\end{lemma}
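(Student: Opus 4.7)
The plan is essentially to unwind the two definitions and observe that they yield the same defining equation at points of the fiber $\pi^{-1}(x)$, then invoke the non-degeneracy of $\Omega$.

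First, I recall the defining relation for the Hamilton vector field. Reading off the sign convention from (\ref{e: defi H eta in general}) (where $H_\eta$ sits in the second slot of $\Omega$), the Hamilton vector field $H_f$ of a smooth function $f$ on $Z$ is characterized by
$$
\Omega_z(X, H_f(z)) \;=\; df(z)(X), \qquad \forall X \in T_z Z.
$$
Next, because $f = \pi^* \bar f$, the chain rule gives $df(z)(X) = d\bar f(\pi(z))\bigl(d\pi(z) X\bigr)$ for every $X \in T_z Z$. Specializing to a point $z \in \pi^{-1}(x)$, we have $\pi(z) = x$, and the hypothesis $d\bar f(x) = \eta$ turns this into
$$
df(z)(X) \;=\; \eta\bigl(d\pi(z) X\bigr), \qquad \forall X \in T_z Z.
$$

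Comparing this with (\ref{e: defi H eta in general}), I would then observe that both $H_f(z)$ and $H_\eta(z)$ satisfy exactly the same equation $\Omega_z(X,\,\dotvar) = \eta(d\pi(z)X)$ in the second slot, for all $X \in T_z Z$. Since $\Omega_z$ is non-degenerate, the vector solving this linear equation is unique, so $H_f(z) = H_\eta(z)$ for every $z \in \pi^{-1}(x)$. This gives the desired equality of vector fields on the fiber.

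There is essentially no obstacle in the argument; the only place one must be careful is bookkeeping of the sign convention in the definition of the Hamilton vector field, so that it matches the convention implicit in (\ref{e: defi H eta in general}). The content of the lemma is that $H_\eta$, originally defined pointwise on a single fiber using only the cotangent vector $\eta$, can equivalently be obtained as the restriction to that fiber of the Hamilton vector field of any smooth extension of $\eta$ pulled back through $\pi$; this equivalence is precisely what allows one to use flows of such Hamilton vector fields to integrate the $H_\eta$ in the subsequent construction of $\varphi$.
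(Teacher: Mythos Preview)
Your argument is correct and is precisely the unwinding of definitions that the paper has in mind; the paper's own proof is the single sentence ``This is an immediate consequence of the definitions of $H_\eta$ and $H_f$,'' and your proposal simply makes that sentence explicit. Your remark about matching the sign convention via (\ref{e: defi H eta in general}) is exactly the right bookkeeping point.
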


\begin{proof}
This is an immediate consequence of the definitions of $H_\eta$ and $H_f.$
\end{proof}

\begin{corollary}
\label{c: commuting vector fields in general}
Let $x \in M$ and $\eta_1, \eta_2 \in T_x^*M.$ Then $H_{\eta_1}$ and $H_{\eta_2}$ commute
as vector fields on the fiber $\pi^{-1}(x).$
\end{corollary}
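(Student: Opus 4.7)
The plan is to reduce the statement to a Poisson-bracket computation via Lemma \ref{l: H eta equals H f}. For $i=1,2$, I would choose a smooth function $\bar f_i : M \to \R$ with $d\bar f_i(x) = \eta_i$ (for instance, $\bar f_i$ linear in a local chart around $x$), and set $f_i := \pi^* \bar f_i \in C^\infty(Z)$. Let $H_{f_i}$ denote the corresponding Hamilton vector field on $Z$. By Lemma \ref{l: H eta equals H f}, the restriction of $H_{f_i}$ to the fiber $\pi^{-1}(x)$ coincides with $H_{\eta_i}$.

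The key point is that $H_{f_1}$ and $H_{f_2}$ commute globally on $Z$. Indeed, since $f_i = \pi^* \bar f_i$ is constant along the fibers of $\pi$, for every tangent vector $X$ to a fiber one has $df_i(X) = d\bar f_i(d\pi(X)) = 0$; by the definition of the Hamilton vector field this means $H_{f_i}$ is $\Omega$-orthogonal to the tangent space of each fiber. Because the fibers are Lagrangian (hence maximal isotropic), $H_{f_i}$ is itself tangent to the fibers of $\pi$. Their Poisson bracket therefore satisfies
\begin{equation*}
\{f_1,f_2\} \;=\; \Omega(H_{f_1}, H_{f_2}) \;=\; 0,
\end{equation*}
the pairing vanishing because both arguments lie in the Lagrangian tangent space of a fiber. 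Consequently $[H_{f_1}, H_{f_2}] = H_{\{f_1,f_2\}} = 0$ on all of $Z$.

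Finally, since $H_{f_1}$ and $H_{f_2}$ are both tangent to the submanifold $\pi^{-1}(x)$, their Lie bracket on $Z$ restricts to the Lie bracket of their restrictions to $\pi^{-1}(x)$. Combined with Lemma \ref{l: H eta equals H f}, this yields
\begin{equation*}
[H_{\eta_1}, H_{\eta_2}] \;=\; [H_{f_1}, H_{f_2}]\big|_{\pi^{-1}(x)} \;=\; 0,
\end{equation*}
proving the corollary. There is no real obstacle here; the only thing to check carefully is that the vanishing of $\{f_1,f_2\}$ genuinely uses both hypotheses (fibers Lagrangian, functions constant along fibers), and that extending $\eta_i$ to a one-form $d\bar f_i$ on a neighborhood of $x$ is harmless because the resulting vector fields $H_{f_i}$ agree with $H_{\eta_i}$ on the fiber regardless of the extension.
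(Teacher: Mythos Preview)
Your argument is correct and follows essentially the same route as the paper: pick $\bar f_i$ with $d\bar f_i(x)=\eta_i$, pull back to $f_i=\pi^*\bar f_i$, use that the $H_{f_i}$ are tangent to the (Lagrangian) fibers to conclude $\{f_1,f_2\}=0$, hence $[H_{f_1},H_{f_2}]=0$, and restrict to $\pi^{-1}(x)$ via Lemma~\ref{l: H eta equals H f}. The only cosmetic difference is that the paper phrases the vanishing of the Poisson bracket as $H_{f_1}f_2=0$ (since $f_2$ is fiberwise constant and $H_{f_1}$ is fiberwise tangent), whereas you phrase it as $\Omega(H_{f_1},H_{f_2})=0$ on a Lagrangian subspace; these are the same observation.
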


\begin{proof}
We select smooth functions $\bar f_j: M \to \R$ with $d\bar f_j(x) = \eta_j$ and define $f_j = \pi^*(\bar f_j).$
Then $H_{f_1}f_2 = 0,$ hence $\{ f_1, f_2\} = 0$ and it follows that $H_{f_1}$ and $H_{f_2}$ commute.
These vector fields are tangent to the fiber $\pi^{-1}(x),$ hence their restrictions to the fiber commute.
These restrictions equal $H_{\eta_1}$ and $H_{\eta_2}$ by the lemma above.
\end{proof}
\medbreak\noindent
{\em Proof of Theorem \ref{thb1}.\ }
If $\eta \in T_x^*M$ we denote by $t \mapsto e^{tH_\eta}s(x)$ the integral curve
of $H_\eta$ in $\pi^{-1}(x)$ with starting point $s(x).$ Its maximal interval of definition
is denoted by $I_\eta.$  There exists an open neighborhood $U$ of $M$ in $T^*M$ such that
for each $x \in M,$ the open set $T^*_xM\cap U$ is star shaped and for each $\eta \in T_x^*\cap U$
the interval $I_\eta$ contains $(-2,2).$
We define $\gf: U \to Z$ by
\begin{equation}
\label{e: defi gf for Lagrange fibration}
\gf(\eta) = e^{H_\eta} s(p(\eta)), \qquad (\eta \in U).
\end{equation}
Then $\gf$ is a local diffeomorphism at each point of $M$ and coincides with an embedding on $M.$
Shrinking $U$ if necessary, we may arrange that in addition to the above, $\gf$ becomes a diffeomorphism from $U$
onto an open neighborhood of $s(M)$ in $Z.$ {}From the construction it is clear that (a) and (b) of
Theorem \ref{thb1} are satisfied.

We will now establish (c). As this is a local statement,
we may assume that there exists a diffeomorphism $\bar f = (\bar f_1, \ldots, \bar f_n)$ from $M$ onto an
open subset of $\R^n.$ Put $f = \pi^*(\bar f) = (f_1, \ldots, f_n).$ Then by
Lemma \ref{l: H eta equals H f} the Hamilton
vector fields $H_{f_i}$ are all tangent to the fibers of $\pi,$ from which we deduce that
$\Omega(H_{f_i}, H_{f_j}) = 0,$ for all $1 \leq i,j \leq n.$

Define $g: \gf(U) \to (\R^n)^*$ by
\begin{equation}
\label{e: defi g}
g(\gf(\eta)) = d\bar f(\pi(\eta))^{-1*}\eta,\qquad (\eta \in U).
\end{equation}
If $t = (t_1, \ldots, t_n) \in (\R^n)^*$ we agree to write  $t \bar f = t\after \bar f = t_1 \bar f_1 + \cdots + t_n \bar f_n$
and $\xi(x,t) =  d(t\bar f)(x) = t \after d\bar f(x) = t_1 d\bar f_1(x) + \cdots + t_n d\bar f_n(x).$
Then
$$
g_j(e^{t_1 H_{f_1}}\circ \cdots \circ e^{ t_n H_{f_n}} s(x)) =
g_j(e^{H_{\xi(x,t)}}s(x)) = \pr_j \,d\bar f(x)^{-1*}\, \xi(x,t) = t_j,
$$
for $(x,t)$ in a suitable neighborhood of the zero section in $M \times ( \R^n)^*.$
{}From this we see that $H_{f_i}g_j = \delta_{ij},$ so that $\Omega(H_{f_i}, H_{g_j}) = \gd_{ij}$
for all $1 \leq i,j \leq n.$ The functions $g_j$ are constant on the Lagrangian submanifold
$s(M)$ of $Z.$ Therefore, the vector fields $H_{g_j}$ are tangent to $s(M),$
and it follows that $\{g_i, g_j\} = \Omega(H_{g_i}, H_{g_j}) = 0$
on $s(M).$ Now $H_{f_k}\{g_i, g_j\} = \{ f_k, \{g_i, g_j\}\} = 0$ by application of the Jacobi identity.
It follows that $\Omega(H_{g_i}, H_{g_j}) = 0$ on a suitable neighborhood of $s(M)$ in $Z.$
We conclude that $\Omega = \sum_i df_i \wedge dg_j$ on this neighborhood,
by evaluation on the vector fields $H_{f_i}, H_{g_j}.$ Shrinking
$U$ if necessary, we may assume the identity to hold on $\gf(U).$
Hence, $\Omega|_{\gf(U)}$ is the pull-back under $(f,g)$ of the standard symplectic form on $\bar f(M) \times (\R^n)^*.$
Let $F: T^*M \to T^*\bar f(M) = \bar f(M) \times (\R^n)^*$ be the canonical symplectic isomorphism induced
by $\bar f.$ Then
for (c) it suffices to prove that the following diagram commutes
$$
\begin{array}{rcl}
T^*M \supset U & \!\!{\buildrel \gf \over \longrightarrow} \!\!& \gf(U) \subset Z\\
\scriptstyle{F}\; \searrow  & & \swarrow \scriptstyle{(f,g)}\\
&\bar f(M) \times (\R^n)^*&
\end{array}
$$
Let $\xi \in U$ and put $x = p(\xi).$ Then, by definition, $F(\xi) = (\bar f(x), d\bar f(x)^{-1*}\xi).$
On the other hand,
$$
(f, g)(\gf(\xi)) = (f(\pi(\gf(\xi)), d\bar f(\pi(\xi))^{-1*}) = (\bar f(x), d\bar f(x)^{-1*}\xi),
$$
by (\ref{e: defi g}),
and commutativity of the diagram follows.

It remains to establish uniqueness.
Assume that $\gf$ satisfies the conditions of the theorem.
We will show that it must be given by (\ref{e: defi gf for Lagrange fibration})
in a neighborhood of the zero section.
The cotangent bundle $p: T^*M \to M$ is Lagrangian, with $M$ as a Lagrangian section.
Hence, for $\eta \in T_x^*M$ and $\xi \in p^{-1}(x) = T^*M,$
we may define $\widetilde H_\eta(\xi) \in T_\xi(p^{-1}(x))$ as $H_\eta,$ but for the bundle $p$ instead of $\pi.$
Using that $\gf^*\Omega = \gs,$ it is an easy matter to check from the definitions that
$$
d\gf(\xi) \widetilde H_\eta(\xi) = H_\eta (\gf(\xi)),
$$
for all $\xi$ in a suitable neighborhood of $M$ in $T^*M.$ For the associated flows
in the fibers $p^{-1}(x)$ and $\pi^{-1}(x)$
this implies that
$$
\gf \after e^{t \widetilde H_\eta}  = e^{t H_\eta} \after \gf.
$$
A computation in local coordinates of $M$ shows that $e^{t \widetilde H_\eta}\xi = \xi + t \eta.$
On the other hand, $\gf(0_x) = s(x) = s(\pi(\eta)),$ and it follows that
$$
\gf(t\eta) = e^{t H_\eta} s(p(\eta)),
$$
for all $t$ in any interval containing zero  on which both expressions are well-defined.
It follows that $\gf$ must be given by (\ref{e: defi gf for Lagrange fibration}) on a suitable neighborhood of
$M$ in $T^*M.$
\qed

\section{Real semisimple groups}
In this section we recall some of the basic structure theory of
real semisimple Lie groups and their Lie algebras. As a basic reference
for this material we recommend \cite{Knapp}.

Let $G$ be a connected real semisimple group with finite center. The group
$G$ has a maximal compact subgroup $K.$ All such are conjugate and connected. The Killing
form $B$ of $\fg$ is known to be negative definite on $\fk$ and positive definite
on the orthocomplement $\fp$ of $\fk.$ In particular,
\begin{equation}
\label{e: Cartan deco}
{\mathfrak g}\, =\, {\mathfrak k}\oplus {\mathfrak p}\,
\end{equation}
as a direct sum of linear spaces. It is known that $[{\mathfrak k}\, ,{\mathfrak k}]\, \subset\,
{\mathfrak k}$, $[{\mathfrak k}\, ,{\mathfrak p}]\, \subset\,
{\mathfrak p}$ and $[{\mathfrak p}\, ,{\mathfrak p}]\, \subset\,
{\mathfrak k}.$
The decomposition (\ref{e: Cartan deco}) is called the Cartan decomposition of $\fg$ associated
with the maximal compact subgroup $K.$ It is readily seen that this decomposition
is $\Ad(K)$-invariant.

The map $\theta: \fg \to \fg$ given by $\theta = I$ on $\fk$
and $\theta = -I$ on $\fp$ is called the associated Cartan involution. It commutes with the adjoint action
of $K.$ We define the bilinear form
$\inp{\dotvar}{\dotvar}$ on $\fg$ by $\inp{X}{Y} = - B(X, \Cartan Y).$ Then $\inp{\dotvar}{\dotvar}$
is a $K$-invariant positive definite inner product on $\fg;$ in other words, $\Ad(K)$ acts
by orthogonal transformations with respect to it.
We note that $\ad \fp$ consists of symmetric transformations.

It is known that the map $(k, X) \mapsto k \exp X$ is a real analytic diffeomorphism
of $K \times \fp$ onto $G.$ Define $\widetilde \Cartan: G \to G$ by $\widetilde\Cartan(k \exp X) = k \exp(-X),$
then it is readily verified that $\widetilde \Cartan$ is an involution of $G$ with derivative
equal to the Cartan involution $\Cartan$ of $\fg.$ We agree to write $\Cartan$ for $\widetilde \Cartan;$
this involution of $G$ is also called the Cartan involution associated with $K.$

\begin{remark}
\label{r: comparison with G complex}
\rm
We note that if $G$ is a complex semisimple group, then it may be viewed as
a real semisimple Lie group with finite center. If $K$ is a maximal compact subgroup, then $\fp = \sqrt{-1}\cdot \fk,$ and $\theta$ is the involution
associated with the real form $\fk.$

On the other hand, if $G$ is linear,
 then $G$ has a complexification $G^\C$ and
$$
\widetilde{\mathfrak k}\, =\, {\mathfrak k}\oplus \sqrt{-1}\cdot
{\mathfrak p}
$$
is the Lie algebra of a maximal compact subgroup of $G^\C.$
\end{remark}

Let $\mathfrak a$ be a maximal abelian subspace of
$\mathfrak p$. It is known that all such are conjugate under $K.$
For each linear functional $\lambda \in \fa^*,$ we put
\begin{equation}
\label{e: eigenspace fa}
{\mathfrak g}_\lambda: = \{ X \in \fg \mid [H,X] = \gl(H) X,\quad \forall H \in \fa\}.
\end{equation}
Since $\fa$ is abelian, and $\ad(H)$ is  symmetric for $\inp{\dotvar}{\dotvar},$
for all $H \in \fa,$ the adjoint representation of $\fa$ in $\fg$ has a simultaneous diagonalization.
It follows that $\fg$ decomposes as a finite direct sum of joint eigenspaces of the form (\ref{e: eigenspace fa}).
Let $\gS$ be the set of nonzero $\gl \in \fa^*$ with $\fz(\gl) \neq 0.$ Then
$$
{\mathfrak g}\, =\, {\mathfrak g}_0\;\oplus\;
\bigoplus_{\ga \in \Sigma}\; {\mathfrak g}_\ga\, .
$$
It is known that $(\fa, \gS)$ is a root system, which is possibly non-reduced. A root $\ga \in \gS$
is called reduced if $\frac 12 \ga$ is not a root.  The set $\gS_0$ of all reduced roots forms a
genuine root system in $\fad.$ For each $\ga \in \gS$ there exists a unique $\ga_0 \in \gS_0$
such that $\ga \in \{\ga_0, 2 \ga_0\}.$ We note that $[\fg_\ga , \fg_\gb] \subset [\fg_{\ga + \gb}]$
for all $\ga,\gb \in \gS.$

A positive system for $\gS$ is a subset $\Pi$ of $\gS$ such that $\gS = \Pi \cup (-\Pi),$
and $\Pi $ and $- \Pi$ are separated by a hyperplane in $\fad,$ i.e., there exists a
$H \in \fa$ such that $\Pi = \{ \ga \in \gS \mid \ga(H) > 0 \}.$
It follows that $\Pi \mapsto \Pi_0 := \Pi\cap \gS_0$ defines a bijection
from the set of positive systems of $\gS$ onto the set of positive systems for $\gS_0.$
Let $\fa^{\reg}$ be the complement in $\fa$ of the union of all  root hyperplanes $\ker \ga$
for $\ga \in \gS.$ Then the connected components of $\fa^\reg$ are called the open Weyl chambers
of the root system $\gS.$ There is an obvious bijection between the set of all
such chambers and the set of positive systems for $\gS.$

Clearly, $\fg_0$ is the centralizer of $\fa$ in $\fg.$ As $\theta = -I$ on $\fa,$ it
follows that $\fg_0$ is invariant under $\theta.$ The centralizer of $\fa$ in $\fk$ is denoted by $\fm.$
Since $\fa$ is maximal abelian in $\fp,$ the
intersection $\fg_0 \cap \fp$ equals $\fa.$ Therefore,
$$
{\mathfrak g}_0\, =\,  {\mathfrak m}\, \oplus \,{\mathfrak a},
$$

\begin{remark}
\label{r: comparison roots}
\rm
In the notation of Remark \ref{r: comparison with G complex},
$\ft:= \sqrt{-1}\cdot\fa \oplus \fa$ is a maximal torus
of $\fg$ whose intersection with $\fk$ is a maximal torus of $\fk.$
Moreover, $\fa$ is the real subspace of $\ft$ consisting of all points
on which the roots of $\ft$ are real. Let $R$ be the set of $\ft$-roots. Then
restriction to $\fa$ induces an isomorphism $R \to \gS.$ In particular, the root
system $\gS$ is reduced in this setting. Accordingly, the root spaces for $\ft$ coincide with
those for $\fa.$ Finally, $\fg_0 = \ft$ and $\fm = \ft \cap \fk = \sqrt{-1}\cdot \fa.$
\end{remark}

Fix a positive system $\gS^+$ for $\gS.$   Let
${\mathfrak n}$ be the sum of all positive root spaces $\fg_\ga,$ for $\ga \in \gS^+.$
Since $\theta = - I$ on $\fa,$ we have
$$
\theta({\mathfrak g}_\ga)\, =\, {\mathfrak g}_{-\ga}\, ,
$$
for every $\ga \in \gS.$ Hence,
$$
{\mathfrak g}\, =\, \theta(\fn)\, \oplus \, {\mathfrak g}_0\,\oplus\, {\mathfrak n}\, .
$$
As ${\mathfrak k}$ is the eigenspace of $\theta$ for the
eigenvalue $1$, we see that
\begin{equation}
\label{e: fk and root spaces}
{\mathfrak k}\, =\, \fm \oplus \sum_{\ga\in \gS^+,\; X\in \fg_\ga} (X + \theta(X))\, .
\end{equation}
It now follows that
$$
\fg = \fk \oplus \fa \oplus \fn,
$$
as a direct sum of vector spaces. The exponential map $\exp: \fg \to G$ maps
$\fa$ and $\fn$ diffeomorphically onto closed subgroups $A$ and $N$ of $G,$ respectively.
Moreover, one has the so-called Iwasawa decomposition
\begin{equation}
\label{e: Iwasawa deco}
G = K A N,
\end{equation}
the multiplication map $(k, a, n)\mapsto kan$ being a diffeomorphism $K \times A \times N \to G.$

\section{Parabolic subgroups}
We recall that a Borel subalgebra of the complexified semisimple Lie algebra  $\fg^\C$
is by definition a maximal solvable subalgebra. A subalgebra of $\fg^\C$ which contains a Borel subalgebra
is said to be parabolic. It is well known that such a subalgebra equals
its own normalizer in $\fg^\C.$

A parabolic subalgebra of $\fg$ is defined to be a subalgebra  $\fP$ whose
complexification $\fP^\C$ is parabolic in $\fg^\C.$
Such an algebra $\fP$ equals its own normalizer in $\fg.$

A parabolic subgroup of $G$ is defined to be a subgroup $P$ which is the normalizer of
a parabolic subalgebra $\fP$ of $\fg.$ Being its own normalizer, $\fP$ is the Lie algebra of $P.$ We proceed by
describing the basic structure theory of parabolic subgroups of $G.$ Details can be
found in, e.g., \cite{Var}, p.\ 279.

The algebra $\fm + \fa + \fn = \fg_0 + \fn$
is a parabolic subalgebra of $\fg.$ It is known to be minimal in the sense that it does not contain any strictly smaller parabolic subalgebra.
The associated minimal parabolic subgroup of $G$ is given
by
$$
P_0 = M A N.
$$
Note that this decomposition is compatible with the Iwasawa decomposition (\ref{e: Iwasawa deco}).
In particular, the multiplication map $M \times A \times N \to P_0$ is a diffeomorphism.
Moreover, from the Iwasawa decomposition (\ref{e: Iwasawa deco}) it follows that
\begin{equation}
\label{e: G is K times P zero}
G = K P_0.
\end{equation}

It is known that every parabolic subgroup of $G$ is conjugate to one containing $P_0.$
The parabolic subgroups containing $P_0$ are finite in number, and may be described as follows.
Let
$$
\bfap := \{H \in \fa \mid \ga(H) \geq 0,\;\; \forall \ga \in \Pi\}
$$
be the closed positive Weyl chamber in $\fa.$
Given $c \in \bfap$ we define
$$
\fP(c) = \bigoplus_{\ga \in \gS, \, \ga(c) \geq 0}\; \fg_\ga.
$$
Clearly, this is a subalgebra of $\fg$ containing $\fm \oplus \fa \oplus \fn,$ hence parabolic.
Moreover, it depends on $c$ through the set $\Pi(c):= \{\ga \in \Pi \mid \ga(c) > 0\}.$
It can be shown that every parabolic subalgebra of $\fg$ containing $\fm \oplus \fa \oplus \fn$
is of the form $\fP(c)$ for some $c \in \bfap.$ In particular, we see that these parabolic
subalgebras are finite in number.

Let $\fn(c)$ be the sum of the root spaces $\fg_\ga$ for $\ga \in \Pi(c).$ Then
$$
\fP(c) = \fz(c) \oplus \fn(c).
$$
As $\fz(c)$ is reductive and normalizes the nilpotent subalgebra $\fn(c),$
this is a Levi decomposition of $\fP(c).$ In particular, $\fn(c)$ is the nilpotent radical of $\fP(c).$
Since $\fn(c) \subset \fn,$ the exponential map maps $\fn(c)$ diffeomorphically onto a closed subgroup
$N(c)$ of $G.$ Let $P(c)$ be the normalizer of $\fP(c)$ in $G,$ then we have the semi direct product
decomposition
$$
P(c) = Z(c) \ltimes  N(c).
$$
In particular, $N(c)$ is the unipotent radical of $P.$

Finally, we note that $\fP(c)$ is the sum of the eigenspaces  for the nonnegative
eigenvalues of $\ad(c).$ Now this definition can be given for any element $c \in \fg$ which
is {\it real hyperbolic}, i.e., for which $\ad(c)$ diagonalizes with real eigenvalues.
Moreover, $\Ad(x) \fP(c) = \fP(\Ad(x) c),$ for each $x \in G.$ It is known
that every real hyperbolic element is conjugate to an element of $\bfap.$ {}From what we just
said, it follows that every algebra of the form $\fP(c),$ with $c$ real hyperbolic, is a parabolic
subalgebra of $\fg.$ Moreover, since all minimal parabolic subalgebras are conjugate,
it follows that every parabolic subalgebra arises in this way.

We retain the assumption that $c \in \bfap.$ Let $P = P(c)$ be the associated parabolic subgroup of $G.$
Since $G/Z(c) \simeq G\times_{P} P/Z(c),$
the natural projection
$$
\pi: G/Z(c) \to G/P
$$
gives the quotient manifold
$G/Z(c)$ the structure of a real analytic fiber bundle over the real flag manifold $G/P,$
with fiber $P/Z(c)\simeq N(c).$

We note that $P$ contains $P_0,$ so that $G = K P,$ by (\ref{e: G is K times P zero}).
It follows that the inclusion map $K \to G$
induces a diffeomorphism $K /K \cap P \simeq G/P.$
Now $K \cap P = K \cap P \cap \Cartan P = K \cap Z(c).$ Put $Z_K(c) = K \cap Z(c).$ Then
\begin{equation}
\label{e: fibration}
G/Z(c) \simeq KP/Z(c) \simeq K \times_{Z_K(c)} P/Z(c) \simeq K \times_{Z_K(c)} N(c),
\end{equation}
exhibiting $G/Z(c)$ as a $K$-equivariant real analytic vector bundle over $K/Z_K(c) \simeq G/P.$

{}From (\ref{e: fk and root spaces}) it follows that the map $X \mapsto X + \Cartan X$
induces a linear isomorphism from $\fn(c)$ onto $\fk/\fk \cap \fz(c).$ This implies that
$K/Z_K(c)$ is a submanifold of $G/Z(c)$ of half the dimension.

\section{Real flag manifolds}\label{sect4}
Let $G$ be a connected real semisimple Lie group with
finite center, and let $\gl \,\in\, \fg^*$ be a real linear functional.
We write $X_\gl = B^{-1}(\gl),$ i.e.,
$$
 \gl(Y) \,= \, B(X_\gl\, , Y), \qquad (Y \in \fg).
$$
The element $\gl$ is called \textit{real hyperbolic} if
$\ad(X_\gl) \in \End(\fg)$ is diagonalizable with real eigenvalues.
{}From now on we assume $\gl$ to be real hyperbolic.

{}From the discussion in the previous section we know
that the element $X_\gl$ is conjugate to an element of the positive
chamber in $\fa.$ Thus, for the purpose of studying the
symplectic geometry of the coadjoint orbit through $\gl,$ we
may -- and will -- assume that $X_\gl$ is contained in the positive chamber
in $\fa$ from the start.

Let $\Omega \,=\, \Omega_\gl$
be the Kostant--Kirillov symplectic form on
the coadjoint orbit $G\cdot \gl$. The centralizer $Z(\gl)$ of $\gl$ in $G$ equals
$Z(X_\gl),$ by invariance of the Killing form.
Via
the natural
$G$--equivariant diffeomorphism
$$
G/Z(\gl) \,\longrightarrow\, G\cdot \gl\, ,
$$
the form
$\Omega$ may be pulled back to a symplectic form on $G/G(\gl)$.
For convenience, the latter form will also be denoted
by $\Omega$.

With notation as in the previous section, we write
${\mathfrak n}(\gl) = \fn(X_\gl)$ and $\fP(\gl) = \fP(X_\gl),$
and likewise $N(\gl) = N(X_\gl)$ and $P(\gl) = P(X_\gl).$
Then $P(\gl)$ is a parabolic subgroup of $G$ with Levi decomposition $P(\gl) = Z(\gl)N(\gl).$

The projection
\begin{equation}\label{bepi}
\pi\,:\, G/Z(\gl) \,\longrightarrow\, G/P(\gl)
\end{equation}
is a $G$--equivariant fibration with fibers  equal
to the $G$--translates of $N(\gl)\, \hookrightarrow\,
G/Z(\gl)$.
On the other hand, the cotangent bundle $T^*(G/P(\gl))$ comes
equipped with the natural Liouville symplectic form $\sigma$.

In this section we will prove the following result.

\begin{theorem}
\label{t: real Lagrangian fibration}
There exists a unique diffeomorphism
$$
\gf_\gl\,:\,  T^*(G/P(\gl)) \,\longrightarrow\, G/Z(\gl)
$$
satisfying the following conditions:
\begin{enumerate}
\item[{\rm (a)}]
$\pi \after \gf_\gl$ equals the projection of $T^*(G/P(\gl));$
\item[{\rm (b)}]
$\gf_\gl $ maps  the zero-section of $T^*(G/P(\gl))$ to $K/Z_K(\gl);$
\item[{\rm (c)}]
$\gf^*_\gl (\Omega_\gl) = (\sigma)$.
\end{enumerate}
\end{theorem}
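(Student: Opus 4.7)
My plan is to verify the hypotheses of Theorem \ref{thb1} and then upgrade the local symplectomorphism it produces to a global one by showing that the relevant vertical vector fields are complete.

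First I would check that the projection $\pi$ in (\ref{bepi}) has Lagrangian fibers and that the $K$-orbit $K\bare = K/Z_K(\gl)$ is a Lagrangian section. For the fibers: at $\bare$ the tangent space to $\pi^{-1}(eP(\gl))$ identifies with $\fn(\gl)$, and for $X \in \fg_\ga$, $Y \in \fg_\gb$ with $\ga,\gb \in \Pi(X_\gl)$ we have $\Omega_{\bare}(X_{\bare}, Y_{\bare}) = B(X_\gl, [X,Y]) = B([X_\gl, X], Y) = 0$, since $[X_\gl, X] \in \fg_\ga$ and $Y \in \fg_\gb$ pair trivially under $B$ (as $\ga + \gb \ne 0$). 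For the section: since $X_\gl \in \fa \subset \fp$, $[\fk, \fk] \subset \fk$, and $B(\fp, \fk) = 0$, the form $\Omega_{\bare}$ vanishes on all pairs of $\fk$-tangent vectors, so $K\bare$ is isotropic; it has the correct dimension to be Lagrangian by the isomorphism $X \mapsto X + \Cartan X$ from $\fn(\gl)$ onto $\fk/\fk\cap\fz(\gl)$ recorded at the end of Section 5. Via the $K$-equivariant diffeomorphism $K/Z_K(\gl) \simeq G/P(\gl)$ coming from $G = KP(\gl)$, this produces the required Lagrangian section $s : G/P(\gl) \to G/Z(\gl)$.

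Theorem \ref{thb1} then supplies a local symplectomorphism near the zero section by the formula $\gf(\eta) = e^{H_\eta} s(p(\eta))$, where $H_\eta$ is the vertical vector field on $\pi^{-1}(p(\eta))$ characterized by $\Omega_z(X, H_\eta(z)) = \eta(d\pi(z)X)$. The key step -- and the main obstacle -- is to show each $H_\eta$ is complete. I would resolve this by exploiting group structure: because $\Omega$ is $G$-invariant and $\pi \after n = \pi$ for $n \in gN(\gl)g^{-1}$ (the subgroup stabilizing $\pi^{-1}(gP(\gl))$), a short computation from the defining property of $H_\eta$ forces it to be equivariant under the simply transitive action of $gN(\gl)g^{-1}$ on $\pi^{-1}(gP(\gl))$. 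Choosing the basepoint $s(gP(\gl))$ identifies this fiber with $gN(\gl)g^{-1}$ and $H_\eta$ with a left-invariant vector field on this simply connected nilpotent Lie group. Such a field is automatically complete, and its time-$1$ flow at the identity is the nilpotent exponential, which is a global diffeomorphism of the Lie algebra onto the group.

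It remains to verify that the resulting global map $\gf_\gl(\eta) = e^{H_\eta} s(p(\eta))$ is a diffeomorphism satisfying (a)--(c). On the fiber over $x = gP(\gl)$, $\gf_\gl$ factors as a linear isomorphism $\eta \mapsto Y_\eta$ from $T^*_x(G/P(\gl))$ onto $\Ad(g)\fn(\gl)$ -- characterized by $B([\Ad(g)X_\gl, \cdot\,], Y_\eta) = \eta$ on $\Ad(g)\Cartan\fn(\gl)$ and inverted using the non-degeneracy of $B$ between opposite root spaces together with the invertibility of $\ad(\Ad(g)X_\gl)$ on $\Ad(g)\Cartan\fn(\gl)$ -- followed by the nilpotent exponential. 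Both factors are diffeomorphisms, so $\gf_\gl$ is a fiberwise and hence global diffeomorphism. Conditions (a) and (b) are built into the formula. For (c), the Darboux-type computation in the proof of Theorem \ref{thb1} yields $\Omega = \sum df_i \wedge dg_j$ on $\pi^{-1}(V)$ for each coordinate chart $V$ of $G/P(\gl)$: the Jacobi-identity argument showing $\{g_i, g_j\} = 0$ on $s(V)$ now propagates over all of $\pi^{-1}(V)$ because the commuting flows of the $H_{f_k}$ are complete and reach every point of $\pi^{-1}(V)$ from $s(V)$. Covering $G/P(\gl)$ by such charts gives $\gf_\gl^*\Omega = \sigma$ globally. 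Uniqueness is inherited directly from the uniqueness clause of Theorem \ref{thb1}.
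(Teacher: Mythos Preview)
Your verification that the fibers and the $K$-orbit are Lagrangian is fine, and the overall strategy of appealing to Theorem \ref{thb1} and then globalizing is exactly what the paper does. The problem is your completeness argument: the claim that $H_\eta$ is left-invariant under the action of $gN(\gl)g^{-1}$ on the fiber is false.

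You argue that $\pi\circ l_n=\pi$ on the fiber and combine this with the $G$-invariance of $\Omega$. It is true that $l_n$ preserves the fiber \emph{setwise}, but the defining equation $\Omega_z(X,H_\eta(z))=\eta(d\pi(z)X)$ involves $d\pi(z)$ on the \emph{whole} tangent space $T_z(G/Z(\gl))$, not just on the vertical directions. By $G$-equivariance of $\pi$ one has $d(\pi\circ l_n)(z)=dl_n^{G/P(\gl)}(eP(\gl))\circ d\pi(z)$, and since $n\in P(\gl)$ fixes $eP(\gl)$, the map $dl_n^{G/P(\gl)}(eP(\gl))$ is the action of $\Ad(n)$ on $\fg/\fP(\gl)$ (this is the content of Lemma \ref{l: action induced by Ad}). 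That action is \emph{not} trivial for $n\in N(\gl)\setminus\{e\}$. Carrying this through, the correct transformation law is
\[
H_\eta(n z)=dl_n(z)\,H_{\,\eta\circ\Ad(n)}(z),
\]
which is precisely Lemma \ref{l: H eta and G action}; in particular $H_\eta$ is not $l_n$-related to itself. The paper's explicit formula $H_V(n)=dl_n(e)\circ T_\gl^{-1}\circ\Ad(n)^{-1}(V)$ makes this visible: the extra factor $\Ad(n)^{-1}$ is exactly what prevents left-invariance (and a similar check rules out right-invariance). Consequently your identification of $\gf_\gl$ on a fiber with ``linear isomorphism followed by the nilpotent exponential'' also collapses.

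The paper fills this gap by pulling $H_V$ back to $\fn(\gl)$ via $\exp$, writing the resulting ODE as $U'=V+\rho(\ad U)V$ with $\rho(t)$ divisible by $t$, and exploiting the grading of $\fn(\gl)$ by the positive eigenvalues of $\ad X_\gl$ to solve the system recursively. Each component is forced to be polynomial in $t$, so the flow is globally defined; a properness argument then shows that $V\mapsto e^{H_V}e_N$ is a diffeomorphism $\fn(\gl)\to N(\gl)$ (Proposition \ref{p: flow gives diffeo}). This is the substantive step you are missing.
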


The proof of this result is based on the ideas of the proof of Theorem \ref{thb1}
and will be given through a number of lemmas.
We start by observing that the symplectic form $\Omega$ on $G/Z(\gl)$
is $G$--invariant. At the origin ${\bar e} \,= \,eZ(\gl)$ it is
given by
\begin{equation}
\label{e: formula Omega at e}
\Omega_{\bar e}(X_\bare,Y_\bare ) = \gl([X,Y]) = B(X_\gl, [X,Y]) = -
B(X, [X_\gl, Y])\, ,
\end{equation}
for  $X,Y \in \fg.$

The following lemma expresses that we are in the set-up
of Theorem \ref{thb1}.

\begin{lemma}\mbox{\ }
\begin{enumerate}
\item[{\rm (a)}]
The fibers of $\pi,$ defined in \eqref{bepi},
are Lagrangian for $\Omega.$
\item[{\rm (b)}]
The submanifold $K/Z_K(\gl)\, \hookrightarrow\, G/Z(\gl)$ is
Lagrangian for $\Omega$.
\end{enumerate}
\end{lemma}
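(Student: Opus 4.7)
The plan is to verify isotropy at the base point $\bare = eZ(\gl)$ in both cases, invoke $G$- or $K$-invariance of $\Omega$ to propagate isotropy along each submanifold, and then check that the dimensions match so that isotropic becomes Lagrangian. Throughout, I will use formula \eqref{e: formula Omega at e}, the root-space decomposition $\fg = \fg_0 \oplus \bigoplus_{\ga \in \gS} \fg_\ga$, and the standard orthogonality properties of $B$ with respect to this decomposition.

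For the dimension count, note that since $X_\gl$ lies in the open positive Weyl chamber of $\fa$ (after a harmless conjugation), the centralizer $\fz(\gl) = \fz(X_\gl)$ is exactly the sum of $\fg_0$ and those root spaces $\fg_\ga$ with $\ga(X_\gl) = 0$. Consequently $\fg/\fz(\gl) \simeq \fn(\gl) \oplus \Cartan\fn(\gl)$, so $\dim (G/Z(\gl)) = 2 \dim \fn(\gl)$. The fiber of $\pi$ through $\bare$ is $P(\gl)/Z(\gl) \simeq N(\gl)$, which has dimension $\dim \fn(\gl)$. And $K/Z_K(\gl) \simeq G/P(\gl)$ via the decomposition \eqref{e: fibration} has dimension $\dim \fn(\gl)$ as well. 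So both submanifolds already have half the dimension of $G/Z(\gl)$; only isotropy needs to be checked.

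For part (a), the tangent space at $\bare$ to the fiber $\pi^{-1}(eP(\gl))$ is the image of $\fn(\gl) = \bigoplus_{\ga \in \Pi(X_\gl)} \fg_\ga$ in $\fg/\fz(\gl)$. Take $X \in \fg_\ga$ and $Y \in \fg_\gb$ with $\ga, \gb \in \Pi(X_\gl)$. Then $[X,Y] \in \fg_{\ga+\gb}$, and since $\ga + \gb$ is a sum of positive roots, $\ga + \gb \neq 0$. As $X_\gl \in \fa \subset \fg_0$ and $B$ pairs $\fg_0$ trivially with $\fg_\mu$ for $\mu \neq 0$, formula \eqref{e: formula Omega at e} gives $\Omega_\bare(X_\bare, Y_\bare) = B(X_\gl, [X,Y]) = 0$. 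By $G$-invariance of $\Omega$, every fiber is isotropic; combined with the dimension count, each fiber is Lagrangian.

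For part (b), take $X, Y \in \fk$. Since $\fk$ is a subalgebra, $[X,Y] \in \fk$. On the other hand $X_\gl \in \fa \subset \fp$, and the Cartan decomposition $\fg = \fk \oplus \fp$ is $B$-orthogonal (as $B$ is negative definite on $\fk$, positive definite on $\fp$, and the two spaces are eigenspaces of the $B$-self-adjoint involution $\Cartan$). Hence $\Omega_\bare(X_\bare, Y_\bare) = B(X_\gl, [X,Y]) = 0$, so $K/Z_K(\gl)$ is isotropic at $\bare$. By $K$-invariance of $\Omega$, isotropy extends to the entire $K$-orbit, and the dimension count promotes this to Lagrangian. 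The only subtle point — and the one place where care is needed — is the computation of $\dim(G/Z(\gl))$, which relies on the observation that the nonzero eigenspaces of $\ad(X_\gl)$ come in $\pm$ pairs (via $\Cartan$) and are exchanged by $\Cartan$; everything else is essentially bookkeeping.
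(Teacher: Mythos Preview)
Your proof is correct and follows essentially the same route as the paper: verify isotropy at $\bare$ via formula \eqref{e: formula Omega at e}, propagate by $G$- or $K$-invariance, and use the dimension count $\dim\fg/\fz(\gl)=2\dim\fn(\gl)$. The only cosmetic differences are that for (a) the paper uses the form $-B(X,[X_\gl,Y])$ together with $B(\fn,\fn)=0$, and for (b) it argues via $\Cartan$-invariance of $B$ (so that $\Omega_\bare=-\Omega_\bare$ on $\fk\times\fk$) rather than citing $B$-orthogonality of $\fk$ and $\fp$ directly; also, $X_\gl$ lies in the \emph{closed} chamber $\bfap$, not the open one, but your description of $\fz(\gl)$ is correct regardless.
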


\begin{proof}
{}From (\ref{e: formula Omega at e}) one sees that $\Omega_{\bar
e}$
vanishes on
$\fn \times \fn,$ so that $T_{\bar e}(N(\gl)\bare)$ is
isotropic in $T_{\bar e}(G/Z(\gl)).$
We agree to write $\bar \fn = \Cartan\fn$ and $\bar \fn(\gl) = \Cartan \fn(\gl).$
{}From the decomposition
$$
\fg \,= \,{\bar \fn}(\gl) \oplus \fz(\gl) \oplus \fn(\gl)
$$
one sees  that $\dim \fg/\fz(\gl) \,= \,2\cdot \dim \fn$. Hence, the
orbit $N(\gl)\bare$ is Lagrangian in $G/Z(\gl).$
By equivariance, the fibers $gN(\gl)\bare$ are Lagrangian as well.
This establishes assertion (a).

For (b) we observe that
for $X\, ,Y \,\in \,\fk$ we have
\begin{eqnarray*}
\Omega_{\bar e}(X,Y) &= &B(X_\gl, [X,Y])
= B(\theta X_\gl, [\theta X, \theta Y])\\
& =&  -B(X_\gl, [X, Y]) = - \Omega_{\bar e}(X,Y).
\end{eqnarray*}
This implies that $T_{\bar e}(K/Z_K(\gl))$ is isotropic in
$T_{\bar e}(G/Z(\gl))$. By $K$-invariance, $K/Z_K(\gl)$ is isotropic
in $G/Z(\gl).$ In the text below equation (\ref{e: fibration}),
we observed that $K/Z_K(\gl)$ has dimension equal to half the dimension
of $G/Z(\gl).$
\end{proof}

We proceed by following the ideas of the proof of
Theorem \ref{thb1}.
Our first goal is to define suitable vector fields along the fibers of
the fibration (\ref{bepi}). The natural projection $G \to G/P(\gl)$ induces an isomorphism
from $\fg/\fP(\gl)$ onto the tangent space $T_{eP(\gl)}(G/P(\gl))$ which we use for identification of the two
spaces.

Let $\eta \in (\fg/\fP(\gl))^*$ and $n \in N(\gl).$ Since $\Omega_{n \bare}$
is non-degenerate on $T_{n \bare } G/G(\gl)$ we may define a tangent vector
$
H_{\gl, \eta}(n\bare) \,=\, H_\eta(n\bare) \,\in\, T_{n\bare}\,G/Z(\gl)
$
by
\begin{equation}\label{e: defi HV}
\Omega_{n\bare}(Z \,,\, H_\eta(n\bare)\,) \,=\, \eta(d\pi(n\bar e)Z)\,,
\end{equation}
for all $Z \in T_{n \bare}\, G/Z(\gl).$ Viewing $\eta$ as an element of $T_{eP(\gl)}^*(G/P(\gl)),$
we see that this tangent vector coincides with the
vector $H_\eta(n \bar e)$ defined in (\ref{e: defi H eta in general}). In particular, $H_\eta(n \bar e)$
is tangent to the fiber $N(\gl)\bar e.$

If $X$ is a homogeneous space for $G,$ and $g \in G,$
we denote by $l_g$ the left multiplication $x \mapsto gx$ on $X.$
For the space $G/P(\gl)$ we note that $dl_g(eP(\gl))$ is a linear isomorphism from
$\fg/\fP(\gl)$ onto $T_{gP(\gl)}(G/P(\gl)).$ Given $\eta \in (\fg/\fP(\gl))^*$ and $g \in G,$ we put
$$
g\cdot \eta : = dl_g(eP(\gl))^{-1*}\eta = \eta \after dl_g(eP(\gl))^{-1}.
$$

\begin{lemma}
\label{l: H eta and G action}
Let $g \in G$ and $\eta \in (\fg/\fP(\gl))^*.$
Let  $H_{g\cdot \eta} \in \Vect(gN(\gl)\bar e)$ be defined as in (\ref{e: defi H eta in general}),
for the bundle $\pi: G/Z(\gl) \to G/P(\gl).$
Then
$$
H_{g\cdot \eta}(g n \bar e)  = dl_g(\bar e) H_\eta(n \bar e), \qquad (n \in N(\gl)).
$$
\end{lemma}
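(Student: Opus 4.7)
My plan is to reduce the identity to a direct verification using two ingredients: the $G$-invariance of $\Omega$ and the $G$-equivariance of the projection $\pi$. Since $H_{g\cdot\eta}(gn\bar e)$ is characterized by its defining equation (\ref{e: defi HV}), it suffices to check that the proposed vector $dl_g H_\eta(n\bar e)$ satisfies that same equation. The forward-looking structure is therefore: pull everything back by $l_{g^{-1}}$, apply the definition of $H_\eta$ at $n\bar e$, then push the resulting covector forward using the chain rule for $d\pi$ and recognize $g\cdot \eta$ on the nose.

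More concretely, I would pick an arbitrary tangent vector $Z \in T_{gn\bar e}(G/Z(\gl))$ and evaluate
\[
\Omega_{gn\bar e}\bigl(Z,\; dl_g H_\eta(n\bar e)\bigr)
= \Omega_{n\bar e}\bigl(dl_{g^{-1}}(gn\bar e)Z,\; H_\eta(n\bar e)\bigr),
\]
which is the $G$-invariance of $\Omega$ applied to $l_{g^{-1}}$. By the defining relation (\ref{e: defi HV}) for $H_\eta$ at $n\bar e$, the right-hand side equals $\eta\bigl(d\pi(n\bar e)\, dl_{g^{-1}}(gn\bar e) Z\bigr)$. The equivariance $\pi\after l_{g^{-1}} = l_{g^{-1}}\after \pi$ gives
\[
d\pi(n\bar e)\after dl_{g^{-1}}(gn\bar e) \,=\, dl_{g^{-1}}(gP(\gl))\after d\pi(gn\bar e),
\]
and since $dl_{g^{-1}}(gP(\gl)) = dl_g(eP(\gl))^{-1}$, the composition $\eta\after dl_{g^{-1}}(gP(\gl))$ is precisely $g\cdot\eta$ by definition. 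Thus
\[
\Omega_{gn\bar e}\bigl(Z,\; dl_g H_\eta(n\bar e)\bigr) \,=\, (g\cdot\eta)\bigl(d\pi(gn\bar e) Z\bigr),
\]
which is exactly the defining equation of $H_{g\cdot\eta}(gn\bar e)$. Non-degeneracy of $\Omega_{gn\bar e}$ then forces equality of the two vectors.

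There is no substantive obstacle here; the argument is essentially a bookkeeping exercise with differentials, and the only point requiring care is correctly tracking the base points at which the various differentials $dl_g$ and $d\pi$ are evaluated, together with the identification $dl_g(eP(\gl))^{-1} = dl_{g^{-1}}(gP(\gl))$ that turns $\eta\after dl_{g^{-1}}$ into $g\cdot\eta$. The lemma will thereafter be used to transport Hamilton-type vector fields along the fibers of $\pi$ from the fiber over $eP(\gl)$ to an arbitrary fiber, which is exactly what is needed to construct the global diffeomorphism $\gf_\gl$ in Theorem~\ref{t: real Lagrangian fibration} by integrating flows, following the scheme used in the proof of Theorem~\ref{thb1}.
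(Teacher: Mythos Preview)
Your proof is correct and is precisely the computation the paper has in mind: the paper's own proof merely states that the result is ``a straightforward consequence of the $G$-invariance of $\Omega$ and the $G$-equivariance of the projection map $\pi$,'' and you have spelled out exactly that verification. (Note that the $\bar e$ in $dl_g(\bar e)$ in the displayed formula should really be $n\bar e$, as your computation correctly reflects.)
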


\begin{proof}
This is a straightforward consequence of the $G$-invariance of $\Omega$ and the $G$-equivariance
of the projection map $\pi: G/Z(\gl) \to G/P(\gl).$
\end{proof}

At a later stage the situation that $g = m \in P(\gl)$ will be of particular interest to us.
As left translation by $P(\gl)$ fixes the element $eP(\gl)$ of $G/P(\gl),$
we see that $(m, \eta) \mapsto m\cdot \eta$
defines an action of $P(\gl)$ on $(\fg / \fP(\gl))^* \simeq T_{eP(\gl)}(G/P(\gl)).$

\begin{lemma}
\label{l: action induced by Ad}
The action $(m, \eta) \mapsto m \cdot \eta$ of $P(\gl)$ on $(\fg/\fP(\gl))^*$ is induced
by the adjoint action of $P(\gl)$ on $\fg.$
\end{lemma}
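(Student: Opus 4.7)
The plan is simply to compute the differential $dl_m(eP(\gl))$ explicitly in terms of the adjoint action, and then dualize. Since the definition $m \cdot \eta = \eta \after dl_m(eP(\gl))^{-1}$ is purely in terms of left translation on $G/P(\gl)$, everything reduces to identifying this differential under the canonical isomorphism $\fg/\fP(\gl) \simeq T_{eP(\gl)}(G/P(\gl))$ furnished by the projection $G \to G/P(\gl)$.

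First I would observe that for $m \in P(\gl)$, left multiplication $l_m$ fixes the base point $eP(\gl)$, so $dl_m(eP(\gl))$ is a linear automorphism of $T_{eP(\gl)}(G/P(\gl))$. For $X \in \fg$, the curve $t \mapsto \exp(tX) P(\gl)$ represents the class of $X$ modulo $\fP(\gl)$, and
\[
l_m(\exp(tX) P(\gl)) = m \exp(tX) m^{-1} \cdot m P(\gl) = \exp(t\, \Ad(m) X)\, P(\gl),
\]
using $m \in P(\gl)$ in the last step. Differentiating at $t = 0$ shows that $dl_m(eP(\gl))$ is the map on $\fg/\fP(\gl)$ induced by $\Ad(m): \fg \to \fg$. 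This makes sense precisely because $\Ad(P(\gl))$ preserves $\fP(\gl)$, which follows from the fact (recalled at the beginning of the section on parabolic subgroups) that $\fP(\gl)$ is its own normalizer in $\fg$ and is the Lie algebra of $P(\gl)$, so $\Ad(m)\fP(\gl) = \fP(\gl)$ for every $m \in P(\gl)$.

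Taking transpose-inverses then gives
\[
m \cdot \eta \;=\; \eta \after dl_m(eP(\gl))^{-1} \;=\; \eta \after \overline{\Ad(m^{-1})} \;=\; \overline{\Ad(m)}^{\,*}\,\eta,
\]
where $\overline{\Ad(m)}$ denotes the automorphism of $\fg/\fP(\gl)$ induced by $\Ad(m)$, and the asterisk denotes the linear dual. This is by definition the action of $P(\gl)$ on $(\fg/\fP(\gl))^*$ induced by the adjoint representation on $\fg$, which is the claim.

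There is no real obstacle in this argument — the only point that must be checked carefully is the well-definedness of the induced map on the quotient, which rests on $\Ad(P(\gl))$-stability of $\fP(\gl)$. Everything else is an elementary tangent-space computation using the fact that $m P(\gl) = P(\gl)$ for $m \in P(\gl)$.
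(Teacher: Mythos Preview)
Your proof is correct and follows essentially the same approach as the paper: both identify $dl_m(eP(\gl))$ with the map on $\fg/\fP(\gl)$ induced by $\Ad(m)$, the paper via the identity $\pr \circ \cC_m = l_m \circ \pr$ and you via an explicit curve computation, after which dualizing gives the claim.
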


\proof
Let $\pr: G \to G/P(\gl)$ be the natural projection and
for $m \in P(\gl),$ let  $\cC_m: G \to G$ denote the conjugation map $x \mapsto m x m^{-1}.$ Then
$\pr \after \cC_m = l_m \after \pr.$ Differentiating this expression at the identity
element, we see that $dl_m(eP(\gl) \in \End(\fg/\fP(\gl))$ is induced by $d\cC_m(e) = \Ad(m).$
The result follows.
\qed

We will now derive a formula for the vector field $H_\eta$ along $N(\gl)\bar e$ that
will allow us to understand the global behavior of its flow.

\begin{lemma}
\label{l: H at e induces iso}
Let $n \in N(\gl).$ The map $\eta \mapsto H_\eta(n\bare)$ is a linear isomorphism from
$(\fg/\fP(\gl))^*$ onto $T_{n \bare}(N(\gl)\bar e).$
\end{lemma}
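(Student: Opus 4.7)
The plan is to reduce the claim to a dimension count plus injectivity. Linearity of the assignment $\eta \mapsto H_\eta(n\bare)$ is immediate from the defining equation (\ref{e: defi HV}), since $\Omega_{n\bare}$ is non-degenerate and the right-hand side depends linearly on $\eta$. That $H_\eta(n\bare)$ lies in $T_{n\bare}(N(\gl)\bar e)$ was already observed in the paragraph following (\ref{e: defi HV}): the fiber $N(\gl)\bare$ through $n\bare$ is Lagrangian by part (a) of the preceding lemma, so $d\pi(n\bare)$ annihilates $T_{n\bare}(N(\gl)\bare)$, and the defining equation then forces $H_\eta(n\bare)$ to lie in this self-orthogonal subspace.

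Once the map is viewed as a linear map $(\fg/\fP(\gl))^* \to T_{n\bare}(N(\gl)\bar e)$, I would compare dimensions. From the root-space decomposition $\fg = \Cartan\fn(\gl) \oplus \fz(\gl) \oplus \fn(\gl)$ together with the Levi decomposition $\fP(\gl) = \fz(\gl) \oplus \fn(\gl)$ we get $\dim_\R (\fg/\fP(\gl))^* = \dim_\R \fn(\gl)$; and since $\exp\colon \fn(\gl) \to N(\gl)$ is a diffeomorphism, $\dim_\R T_{n\bare}(N(\gl)\bar e) = \dim_\R \fn(\gl)$ as well. Hence the map is an isomorphism if and only if it is injective.

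For injectivity, suppose $H_\eta(n\bare) = 0$. Then (\ref{e: defi HV}) gives $\eta(d\pi(n\bare) Z) = 0$ for every $Z \in T_{n\bare} G/Z(\gl)$. Since $n \in N(\gl) \subset P(\gl)$, we have $\pi(n\bare) = eP(\gl)$, and because $\pi$ is a submersion, $d\pi(n\bare)$ surjects onto $T_{eP(\gl)}(G/P(\gl)) \simeq \fg/\fP(\gl)$. Thus $\eta$ annihilates all of $\fg/\fP(\gl)$, forcing $\eta = 0$. There is no serious obstacle in this argument; it simply consolidates the non-degeneracy of $\Omega$, the Lagrangian property of the fiber (from the preceding lemma), and the matching of dimensions.
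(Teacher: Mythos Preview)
Your proof is correct and follows essentially the same route as the paper's: both establish that $H_\eta(n\bare)$ lands in the tangent space to the fiber via the Lagrangian property, then conclude by injectivity plus a dimension count. Your injectivity argument is in fact more explicit than the paper's terse ``since $\Omega_{n\bare}$ is non-degenerate, the map is injective,'' since you correctly spell out the additional ingredient that $d\pi(n\bare)$ is surjective.
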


\proof
Since $\Omega_{n \bare}$ is non-degenerate, the map is injective. The expression on the right-hand
side of (\ref{e: defi HV}) vanishes for all $Z \in T_{n \bare}(N(\gl)\bare).$
Since $T_{n \bare}(N(\gl)\bare)$
is Lagrangian for the form $\Omega_{n \bare}$ it follows that $H_{\eta}(n\bare )$ belongs to
$T_{n \bare}(N(\gl)\bare).$ The result now follows for dimensional reasons.
\qed

For each point $n \in N(\gl)$ the natural map $n \mapsto n \bar e$ is an embedding
of $N(\gl)$ onto the closed submanifold $N(\gl)\bare = \pi^{-1}(eP(\gl))$ of $G/Z(\gl).$
The derivative of this embedding is a linear isomorphism from $T_n N(\gl)$ onto $T_{n\bar e} N(\gl)\bare$
through which we shall identify these spaces.

Since
 $\fn(\gl)$ and $\fP(\gl)$ are perpendicular for the Killing form $B,$
the map $X \mapsto - B(X, \dotvar)$ induces a linear map
\begin{equation}
\label{e: V to eta V}
V \mapsto \eta_V, \;\fn(\gl) \to (\fg/\fP(\gl))^*.
\end{equation}
As $B$ is non-degenerate, the map (\ref{e: V to eta V}) is a linear isomorphism onto.
Given $V \in \fn(\gl)$ we agree to write
$$
H_V(n) = H_{\eta_V}(n\bare), \qquad (n \in N(\gl)),
$$
viewed as an element of $T_n N(\gl).$ Accordingly, $H_V$ becomes a vector field on $N(\gl).$
The following lemma gives an explicit formula for this vector field.
It involves the endomorphism
\begin{equation}
\label{e: defi T gl}
T_\gl := \ad(X_\gl)|_{\fn(\gl)} \in \End(\fn(\gl)).
\end{equation}
As the roots of $\fn(\gl)$ are positive on $X_\gl,$
this endomorphism is invertible. {}From (\ref{e: formula Omega at e}) we see that
\begin{equation}
\label{e: Omega and T gl}
\Omega_\bare(X + \fz(\gl), Y + \fz(\gl)) = - B(X, T_\gl Y),
\end{equation}
for all $X,Y \in \fg.$

\begin{lemma}
Let $V \in \fn(\gl).$ Then for each $n \in N(\gl),$
\begin{equation}\label{e: direct formula HV}
 H_V(n) \,=\,  d l_n(e) \after T_\gl^{-1} \circ \Ad(n)^{-1}( V ).
\end{equation}
\end{lemma}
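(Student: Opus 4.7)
The plan is twofold: (i) reduce to the case $n = e$ by exploiting the $G$-equivariance of the family $\{H_\eta\}$ established in Lemma \ref{l: H eta and G action}, and then (ii) solve the defining relation \eqref{e: defi HV} at the origin by comparing it with the explicit formula \eqref{e: Omega and T gl} for $\Omega_{\bar e}.$

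For step (i), I would apply Lemma \ref{l: H eta and G action} with $g = n^{-1};$ since $\pi(n \bar e) = eP(\gl),$ this yields
$$
H_V(n) \,=\, H_{\eta_V}(n\bar e) \,=\, dl_n(\bar e)\, H_{n^{-1}\cdot \eta_V}(\bar e).
$$
By Lemma \ref{l: action induced by Ad} the action of $n^{-1} \in P(\gl)$ on $(\fg/\fP(\gl))^*$ is the one induced by $\Ad(n^{-1}),$ so using the $\Ad$-invariance of $B$ one computes
$$
(n^{-1}\cdot \eta_V)(X) \,=\, -B(V, \Ad(n) X) \,=\, -B(\Ad(n)^{-1}V, X) \,=\, \eta_{\Ad(n)^{-1}V}(X).
$$
Thus $n^{-1}\cdot \eta_V = \eta_{\Ad(n)^{-1}V},$ and consequently $H_V(n) = dl_n(\bar e)\, H_{\Ad(n)^{-1}V}(e).$

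For step (ii), I must check that $H_W(e) = T_\gl^{-1}(W)$ for every $W \in \fn(\gl).$ Setting $U = H_W(e),$ regarded as an element of $T_e N(\gl) \simeq \fn(\gl),$ the defining relation \eqref{e: defi HV} reads
$$
\Omega_{\bar e}(Z + \fz(\gl),\, U) \,=\, \eta_W(Z + \fP(\gl)) \qquad \text{for all } Z \in \fg.
$$
By \eqref{e: Omega and T gl} the left-hand side equals $-B(Z, T_\gl U),$ while the right-hand side equals $-B(W, Z).$ Since both sides are insensitive to modifying $Z$ by an element of $\fz(\gl),$ the identity $B(Z, T_\gl U - W) = 0$ holds for all $Z \in \fg,$ and non-degeneracy of $B$ forces $T_\gl U = W,$ i.e., $U = T_\gl^{-1}W.$ Composing with the conclusion of step (i) gives \eqref{e: direct formula HV}.

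The only subtlety worth highlighting is that $\eta_V$ is a well-defined functional on $\fg/\fP(\gl),$ which requires the $B$-orthogonality $\fn(\gl) \perp \fP(\gl).$ This follows from the decomposition $\fP(\gl) = \fz(\gl) \oplus \fn(\gl),$ the standard vanishing $B(\fg_\ga, \fg_\gb) = 0$ for $\ga + \gb \neq 0,$ and the fact that roots of $\fn(\gl)$ are strictly positive on $X_\gl$ while those of $\fz(\gl)$ vanish on $X_\gl.$ Beyond checking this orthogonality, the proof is a direct unwinding of definitions, with no deeper obstacle than keeping the various dualizations consistent.
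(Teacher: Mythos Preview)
Your proof is correct and follows essentially the same approach as the paper's: both establish the formula at $n=e$ by comparing the defining relation \eqref{e: defi HV} with \eqref{e: Omega and T gl} to obtain $H_W(e)=T_\gl^{-1}W$, and both reduce the general case to $n=e$ via Lemmas~\ref{l: H eta and G action} and~\ref{l: action induced by Ad}, computing $n^{-1}\cdot\eta_V=\eta_{\Ad(n)^{-1}V}$. The only difference is that the paper treats $n=e$ first and then extends, whereas you reduce first and then solve at the identity; your added justification of the orthogonality $\fn(\gl)\perp_B\fP(\gl)$ is a point the paper simply asserts.
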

\medbreak
\vspace{1mm}
\begin{proof}
Let $\eta = \eta_V.$ {}From (\ref{e: defi HV}) it follows that for every $X \in \fg$ we have
\begin{eqnarray*}
\Omega_{\bar e}(X + \fP(\gl), H_\eta(\bar e))& = &  \eta(X + \fP(\gl))\\
&=& - B(V,X).
\end{eqnarray*}
On the other hand, since $H_\eta(\bar e) = H_V(e) + \fP(\gl)$ it follows from
(\ref{e: Omega and T gl}) that
$$
\Omega_{\bar e}(X + \fP(\gl), H_\eta(\bar e)) =  - B(X, T_\gl H_V(e)).
$$
Comparing the two equalities, and using that the Killing form is non-degenerate, we find that
\begin{equation}
\label{e: H V e}
H_V(e) = T_\gl^{-1} V.
\end{equation}
This establishes (\ref{e: direct formula HV}) for $n =e.$

To establish the formula in general, we observe that
from Lemma \ref{l: action induced by Ad} it follows that for every $n \in N(\gl)$ we have
$$
n \cdot [\eta_V] = \eta_V\after \Ad(n)^{-1} = \eta_{\Ad(n)V}.
$$
{}From Lemma \ref{l: H eta and G action} with $g = n^{-1}$ we now infer that
$$
H_{\eta_V}(n \bar e) = H_{\eta_{\Ad(n)^{-1}V}}(\bar e).
$$
Therefore,
$$
H_V(n) = dl_n(e) H_{\Ad(n)^{-1}V}(e) = dl_n(e) \after T_\gl^{-1}\after \Ad(n)^{-1}(V).
$$
\end{proof}

\begin{proposition}
\label{p: flow gives diffeo}
For every pair $V_1, V_2 \,\in\, \fn(\gl)$, the associated vector fields
$H_{V_1}$ and $H_{V_2}$
in $\Vect(N(\gl))$ commute. Moreover, the flows
of these vector fields are well defined as maps
$\R \times N(\gl)\, \longrightarrow\, N(\gl)$.
The associated map
$$
V \,\longmapsto\, \exp({H_{V}})e_N
$$
induces a diffeomorphism from $\fn(\gl)$ onto $N(\gl).$
\end{proposition}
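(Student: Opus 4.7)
The plan is to upgrade the explicit formula \eqref{e: direct formula HV} into a transitive free smooth action of the additive Lie group $(\fn(\gl),+)$ on $N(\gl)$; the desired diffeomorphism $V \mapsto \exp(H_V) e_N$ is then simply the orbit map through $e_N$. Commutativity of $H_{V_1}$ and $H_{V_2}$ follows directly from Corollary \ref{c: commuting vector fields in general} applied to the Lagrangian fibration $\pi: G/Z(\gl) \to G/P(\gl)$ at the point $eP(\gl)$: the linear isomorphism $V \mapsto \eta_V$ of \eqref{e: V to eta V} identifies our vector fields $H_V$ on $N(\gl)\bare$ with the $H_{\eta_V}$ of the corollary, so $[H_{V_1},H_{V_2}] = 0$ on $N(\gl)\bare$, and transferring via the diffeomorphism $n \mapsto n \bare$ gives the same on $N(\gl)$.

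The main work will be establishing completeness of each flow. Using the global exponential chart $\exp: \fn(\gl) \to N(\gl)$ (which exists because $N(\gl)$ is simply connected and nilpotent), I write $n(t) = \exp X(t)$. Combining \eqref{e: direct formula HV} with the standard identity $d\exp_X = dl_{\exp X}(e) \circ \frac{1 - e^{-\ad X}}{\ad X}$ converts the ODE $\dot n = H_V(n)$ into
$$
\dot X(t) \;=\; \frac{\ad X(t)}{1 - e^{-\ad X(t)}}\, T_\gl^{-1}\, e^{-\ad X(t)}\, V,
$$
a polynomial equation in $X(t)$, since $\ad X(t)$ is nilpotent on $\fn(\gl)$. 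The key is the $\ad(X_\gl)$-eigenspace grading $\fn(\gl) = \bigoplus_c \fn_c$ with $c > 0$, which satisfies $[\fn_c, \fn_{c'}] \subset \fn_{c+c'}$ and $T_\gl|_{\fn_c} = c \cdot I$. Projecting onto the lowest grading $c_0$ kills every $\ad X$-correction, giving $\dot X_{c_0}(t) = V_{c_0}/c_0$, which integrates globally. The projection onto the next grading $c_1$ becomes a polynomial in the already-solved $X_{c_0}(t)$, hence polynomial in $t$; and so on through the finitely many levels by induction. This produces $X(t)$ as a polynomial in $t$, defined for all $t \in \R$.

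With commutativity and completeness in hand, $(V,n) \mapsto \exp(H_V) n$ is a smooth action of $(\fn(\gl),+)$ on $N(\gl)$, and the orbit map $\Phi: V \mapsto \exp(H_V) e_N$ has derivative $V \mapsto T_\gl^{-1} V$ at $0$, an isomorphism by Lemma \ref{l: H at e induces iso}. Equivariance promotes this to a local diffeomorphism at every point, so the $\fn(\gl)$-orbit of $e_N$ is open; by connectedness of $N(\gl)$ and openness of all orbits, $\Phi$ is surjective. The stabilizer $\Gamma = \Phi^{-1}(e_N) \subset \fn(\gl)$ is closed and discrete, the latter by injectivity of $d\Phi(0)$, so $\Phi$ factors as a covering $\fn(\gl) \to \fn(\gl)/\Gamma \simeq N(\gl)$ with deck group $\Gamma$; since $N(\gl)$ is simply connected, $\Gamma = 0$ and $\Phi$ is a diffeomorphism. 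The main obstacle is completeness: the decoupling of the exponential-coordinate ODE along the $\ad(X_\gl)$-grading is what makes polynomial, and hence global, integration possible.
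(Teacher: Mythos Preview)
Your proof is correct and tracks the paper's argument closely through the first two steps: commutativity is obtained from Corollary~\ref{c: commuting vector fields in general} exactly as in the paper, and completeness is established by the same mechanism---pulling $H_V$ back through $\exp:\fn(\gl)\to N(\gl)$, expanding the resulting ODE as a polynomial in $\ad X$ via the nilpotency of $\fn(\gl)$, and then integrating component by component along the $\ad(X_\gl)$-eigenspace grading to produce global polynomial solutions. (The paper phrases the filtration in terms of an ordered eigenbasis $V_1,\ldots,V_n$ rather than the eigenspace grading $\fn(\gl)=\bigoplus_c \fn_c$, but the content is identical.)

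Where you genuinely diverge is in the final step. The paper shows that the induced $\R^n$-action on $\fn(\gl)$ is \emph{proper} by an explicit inductive bound on the set $T(\cC)=\{t\in\R^n\mid \gf(t,\cC)\cap\cC\neq\emptyset\}$ for compact $\cC$, and then argues: proper local diffeomorphism $\Rightarrow$ covering map $\Rightarrow$ diffeomorphism onto the simply connected target $\fn(\gl)$. You instead exploit the group-action structure directly: equivariance of $\Phi$ under the $(\fn(\gl),+)$-action, together with $d\Phi(0)=T_\gl^{-1}$ invertible, forces $\Phi$ to be a local diffeomorphism \emph{everywhere}; hence every orbit is open, so connectedness of $N(\gl)$ gives transitivity, and the discrete stabilizer $\Gamma$ must be trivial because $\fn(\gl)/\Gamma\simeq N(\gl)$ is simply connected. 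Your route is shorter and more conceptual---it sidesteps the properness estimate entirely---while the paper's properness argument is more hands-on and yields slightly more (control over how the flow moves compact sets). Both reach the covering-map conclusion and invoke simple connectivity; they just arrive there by different doors.
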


\begin{proof}
The first assertion follows from Corollary \ref{c: commuting vector fields in general} applied
to the bundle $\pi: G/Z(\gl) \to G/P(\gl).$

Let now $V \,\in\, \fn(\gl)$. In our study of the flow of the vector field
$H_V \,\in\, \Vect(N(\gl))$ formula (\ref{e: direct formula HV}) will play
a crucial role.
Let $h_V$ denote the pull back of the vector field $H_V$
under the exponential map $\exp\,:\, \fn(\gl) \, \longrightarrow\, N(\gl).$
Thus,
\begin{equation}
\label{e: formula h V}
h_V(U) \,=\, d\exp(U)^{-1} H_V(\exp U).
\end{equation}
Now
\begin{equation}
\label{e: derivative exp}
d\exp(U) \,=\, dl_{\exp U}(e) \circ [I + R({\rm ad}\, U)],
\end{equation}
where $R$ is the analytic function ${\mathbb R}\, \longrightarrow\,
\mathbb R$ given by the convergent power series
$$
R(t) \,=\,  \frac{1 - e^{-t}}{t} - 1 = \sum_{n \geq 1}
\frac{(-t)^n}{(n+1)!}\, .
$$
Since $\fn(\gl)$ is nilpotent, there exists a smallest
positive integer $N_0$ such that $({\rm ad}\, U)^{N_0 + 1} \,= \,0$ for
all
$U \,\in\, \fn(\gl)$.
It follows that formula (\ref{e: derivative exp}) is also valid with the polynomial
$$
R(t) = \sum_{n =  1}^{N_0} \frac{(-t)^n}{(n+1)!}
$$
Combining (\ref{e: direct formula HV}), (\ref{e: formula h V}) and (\ref{e: derivative exp}),
we see
that the vector field $h_V$ on $\fn(\gl)$ is given by
\begin{eqnarray*}
h_V(U) & = & [I + R({\rm ad}\, U)]^{-1} \circ \,T_\gl^{-1} \circ  \,e^{ -
{\rm ad}\, U} ( V )\\
& = & V + \rho_\gl({\rm ad}\, U)(V),
\end{eqnarray*}
with $\rho_\gl \,=\, \rho \in {\mathbb R}[t]$ a polynomial
divisible by $t$; see (\ref{e: defi T gl}) for the definition of $T_\gl.$

We recall that, by assumption, the element ${\rm ad}\, X_\gl$
diagonalizes with real eigenvalues.
Let
$$
\nu_1 < \nu_2 < \cdots < \nu_p
$$
be the positive eigenvalues,
and let  $\fn(\nu_j)$ be the eigenspace associated with the eigenvalue
$\nu_j.$
Let $d_j$ be the dimension of this eigenspace. The sum
of the eigenspaces $\fn(\nu_j)$ equals $\fn(\gl).$ Accordingly, we fix a basis of eigenvectors
$V_1,\ldots, V_n$ for ${\rm ad}\, X_\gl$ in $\fn(\gl)$, and put $\fn_j\, =
\, {\mathbb R} V_j$.
By choosing a suitable numbering we may arrange that for each $k,$
$$
\bigoplus_{i \leq k}\;\; \fn(\nu_i) \,=\, \bigoplus_{j \leq d_1 + \cdots + d_k} \fn_j.
$$
Given $1 \leq k \leq n,$ we put
$$
\fn_{\geq k} = \bigoplus_{j \geq k}\;\; \fn_j\, .
$$
In addition, we put $\fn_{\geq k}\, =\, 0$ for $k \,>\, n$.
The subspaces $\fn_{>k}, \fn_{\leq k}$ and $\fn_{<k}$ of $\fn(\gl)$ are defined
in a similar fashion.

For $1 \leq j \leq n,$ we denote by $\pr_j$ the projection map $\fn(\gl)\,
\longrightarrow\, \fn_j$ along the remaining summands $\fn_i, \, i \neq j.$
Also, we define $\pr_{\geq k} = \sum_{j \geq k} \pr_j$. The projections
$\pr_{> k}$,
$\pr_{\leq k}$ and $\pr_{< k}$ are defined in a similar fashion.

By the Jacobi identity, ${\rm ad}\, X_\gl$ acts
on $[\fn(\nu_i), \fn(\nu_j)]$ as the scalar multiplication through
$\nu_i + \nu_j$. Hence, for each $k \,\geq\, 1$,
$$
[\fn(\gl)\, , \fn_k] \,\subset\, \fn_{> k}\, .
$$

Let $V \in \fn(\gl).$ Then the integral curve $t \,\longmapsto\, U(t)$ of the vector field $h_V$
with initial point
$U(0)\, =\, U_0$ is determined by the initial value problem
\begin{equation}\label{ebc}
U'(t) =  V + \rho({\rm ad}\, [U(t)]) (V)\, ,\qquad U(0) = U_0\, .
\end{equation}
For the component $U_1\, :=\, \pr_1\circ U$ in $\fn_1$ the equation
becomes
$$
U_1'(t)  \, =\, \pr_1 V\, ,\qquad U_1(0) = \pr_1 (U_0)\, .
$$
Indeed, $\rho({\rm ad}\, U) V$ has its values in $\fn_{>1}$.
The equation for $U_1$ has the solution
$$
U_1(t) \,=\, t \,\pr_1 V + \pr_1 (U_0)\, ,
$$
which is linear in $t$.
The remaining components may now be obtained by a recurrence
procedure and integration.
More precisely, let $k \geq 2,$ assume that $U_j := \pr_j \after U$ has been solved for
each $1 \leq j \leq k-1$, and put
$$
U_{<k} \, = \, \sum_{j < k} U_j\, .
$$
Then $U_k$ is determined by the initial value problem
$$
\left\{
\begin{array}{lll}
U_k'(t) & = & \pr_k V + \pr_k\circ \rho(\ad[U(t)])(V ) =
 \pr_k V + \pr_k \circ \rho(\ad[U_{< k}(t)]))( V) ,\\
U_k(0) & = & \pr_k (U_0).
\end{array}
\right.
$$
This equation may be solved directly by integration. By induction one sees
that the integral curve is defined for all $t\,\in\, \mathbb R$ and is
in fact a polynomial function of $t$.

For the final part of the proof it is important to make the following observation.
If $V \,\in\, \fn_{\geq k}$, then the integral curve $U(t)$ satisfies
\begin{equation}
\label{e: U t modulo gneq k}
U(t) - U_0 - t V \,\in\,  \fn_{> k}
\end{equation}
for all $t\, \in\, \mathbb R$.
Indeed, this follows by applying the projection $\pr_{\leq k}$ to the constituents of the equation
\eqref{ebc} and solving the resulting equation.

The vector fields $h_j \,:=\, h_{V_j}$, where $1 \leq j \leq n$,
form a collection of commuting vector fields
for which the flows are defined globally. Consequently, the associated flow maps $(t, c) \mapsto e^{t h_j}c$
are smooth maps $\R \times \fn(\gl) \to \fn(\gl).$ Let $1 \leq k \leq n.$ It follows
from (\ref{e: U t modulo gneq k})
that
\begin{equation}
\label{e: pr of flow hk}
\pr_{\leq k} \;e^{th_k} x = \pr_{\leq k} x +  t V_k,
\end{equation}
for all $x \in \fn(\gl)$ and $t \in \R.$
Since the vector fields $h_j$ commute, the map
$
\gf\,:\, {\mathbb R}^n \times \fn(\gl) \,\longrightarrow\, \fn(\gl)
$
given by
$$
\gf(t, x) \,=\, e^{t_1 h_{1}}\circ \cdots\circ e^{t_n h_{n}}(x)\, ,
$$
defines a smooth action of $(\R^n, +, 0)$ on $\fn(\gl).$ It follows by repeated application
of (\ref{e: pr of flow hk}) that
$$
\pr_k\; \gf(0, t_k, \ldots, t_n, x) = \pr_k x + t_k V_k,
$$
for all $t_k, \ldots, t_n \in \R$ and $x \in \fn(\gl).$
We will use this observation to show that the action $\gf$ is proper.

For the proof of this, it is convenient to have the following
notation. For a compact subset $\cC \,\subset\, \fn(\gl)$ we write
$$
T(\cC)\,:=\,
\pr_{\R^n}(\gf^{-1}(\cC) \cap \;{\mathbb R}^n \times \cC)\, =\,
\{ t\in {\mathbb R}^n \mid \exists\, c \in \cC:\; \gf(t, c) \in \cC\}.
$$
For proving properness of the action, it
suffices to show that for every compact subset $\cC\,\subset\, \fn(\gl)$,
the set $T(\cC)$ defined above is bounded.
Indeed, this implies that $\gf^{-1}(\cC) \cap \;{\mathbb R}^n
\times \cC$ is compact. For $1 \leq k \leq n,$ let $\pi_{\leq k}: \R^n \to \R^k$
be projection onto the first coordinates.
By induction on $k$ we will show that for every compact
set $\cC \,\subset\, \fn(\gl)$ the set $\pi_{\leq k} ( T(\cC))$ is bounded.

First, let $k \,=\, 1$, and let $t \in T(\cC)$. Then there exists
some $c\, \in \,\cC$
such that $\gf(t, c) \,\in\, \cC$. Since
$$
\pr_1 \;\gf(t , c) = \pr_1 \,c + t_1 V_1\, ,
$$
it follows that $t_1 V_1$ belongs to the vectorial sum $-\pr_1(\cC) + \pr_1 (\cC)$, which shows that
$\pi_{\leq 1}(T(\cC))$ is bounded.

Next, let $1 \,\leq\, k \,< \,n$, and assume that
$\pr_{\leq k}(T(\cC))$ is bounded
for every compact subset $\cC \, \subset\, \fn(\gl)$.  Let $t \,\in\,
T(\cC)$. Then there exists a $c \, \in\, \cC$
such that $\gf(t\, ,c) \, \in\, \cC$. The element $(t_1\, ,
\ldots\, , t_{k})$ lies in the  subset
$$
S\, := \, {\rm cl}\, \pr_{\leq k} (T(\cC))
$$
of ${\mathbb R}^k$, which is compact by the inductive hypothesis.
 It follows that
$$
\gf(0,t_{k+1}, \ldots, t_n, c) \,=\, e^{-t_1 h_1}\cdots e^{-t_{k}h_k}
\gf(t,c)
$$
lies in the compact subset
$$
\cC'=\{ e^{-t_1 h_1}\cdots e^{-t_k h_k}c\mid
(t,c) \in S \times \cC\}\, \subset\, \fn(\gl)\, .
$$
Now
$$
 \pr_{\leq k + 1} \;\gf(0,t_{k+1}, \ldots, t_n, c)\, =\,
 \pr_{\leq k+1} c  + t_{k+1} V_{k+1} \, ,
$$
{}from which we see that $t_{k+1} V_{k+1}$ belongs to
the vectorial sum $-  \pr_{\leq k+1} (\cC)  +  \pr_{\leq k+1} (\cC')$.
{}From this we conclude that $\pi_{\leq k+1} \,T(\cC)$ is bounded.

We now come to the final assertion. It follows from the above
that the map
$$
\psi\, :\, {\mathbb R}^n \,\longrightarrow\,  \fn(\gl)
$$
defined by $t\, \longmapsto\, \gf(t,0)$ is proper.
Moreover, since at every point the vector fields $h_1, \ldots, h_n$ are linearly
independent, it follows that $\psi$ is a local diffeomorphism.
Therefore, $\psi$ has open and closed image, hence it is surjective onto
$\fn(\gl)$. Moreover, the fibers of $\psi$ are finite and discrete. Hence,
$\psi$ is a covering map.
Since $\fn(\gl)$ is simply connected, it follows that $\psi$ is a diffeomorphism
from ${\mathbb R}^n$ onto $\fn(\gl)$. We consider the linear bijection
$$
\tau\,:\,  \fn(\gl) \,\longrightarrow\, {\mathbb R}^n
$$
given by $V \,= \, \sum_j \tau(V)_j V_j$.
By linearity of the map  $V \,\longmapsto \,h_V$
it follows that for all $V \,\in\, \fn(\gl)$,
$$
\exp(h_V)(0) = \exp(\tau(V_1) h_1 + \cdots + \tau(V_n) h_n) (0) =
\gf(\tau(V), 0)\, .
$$
This implies that the map defined by $V \, \longmapsto\, \exp(h_V)(0)$
is a diffeomorphism from $\fn(\gl)$ onto
$\fn(\gl)$. Since $h_V$ is the pull back of $H_V$ by the diffeomorphism
$\exp\,:\, \fn(\gl) \,\longrightarrow\,  N(\gl)$, it follows that
$$
e^{H_V} = \exp \circ \, e^{h_V} \circ \exp^{-1}.
$$
Hence, the map defined by $V \, \longmapsto\, e^{H_V}e_N$
is a  diffeomorphism from $\fn(\gl)$ onto $N(\gl)$. This completes
the proof of the proposition.
\end{proof}

\begin{corollary}
\label{c: defi Phi}
The map $\widetilde \Phi\, :\,  K \times (\fg/\fP(\gl))^* \, \longrightarrow\, G/Z(\gl)$
given by
$$
\widetilde \Phi(k, \eta) \,=\, k \exp(H_\eta)\bare
$$
induces a $K$--equivariant diffeomorphism of fiber bundles
$$
\Phi_\gl \, :\, K \times_{Z_K(\gl)} (\fg/\fP(\gl))^*
 \, \longrightarrow\,  G/Z(\gl)\, .
$$
Here the action of $Z_K(\gl)$ on $(\fg/\fP(\gl))^*$ is induced by the
adjoint action of $Z_K(\gl)$ on $\fg.$
\end{corollary}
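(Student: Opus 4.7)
My plan is to verify that $\widetilde\Phi$ descends to a well-defined smooth map $\Phi_\gl$ on the associated bundle $K \times_{Z_K(\gl)} (\fg/\fP(\gl))^*,$ and then to identify $\Phi_\gl$ with the composite of two diffeomorphisms that are already in hand: the fiberwise parametrization $(\fg/\fP(\gl))^* \xrightarrow{\sim} N(\gl)\bare$ furnished by Proposition \ref{p: flow gives diffeo} combined with the linear isomorphism $V \mapsto \eta_V$ from (\ref{e: V to eta V}), and the $K$-equivariant bundle identification $G/Z(\gl) \simeq K \times_{Z_K(\gl)} N(\gl)\bare$ recorded in (\ref{e: fibration}).

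The first step is the key equivariance identity
$$
m \exp(H_\eta)\bare \,=\, \exp(H_{m \cdot \eta})\bare, \qquad m \in Z_K(\gl),\ \eta \in (\fg/\fP(\gl))^*.
$$
I will establish this by uniqueness of integral curves. Set $\delta(t) = m \cdot e^{tH_\eta}\bare.$ Since $m \in Z_K(\gl) \subset P(\gl)$ fixes $eP(\gl),$ left multiplication by $m$ preserves the fiber $\pi^{-1}(eP(\gl)) = N(\gl)\bare,$ so $\delta$ is a curve in that fiber with $\delta(0) = \bare.$ Applying Lemma \ref{l: H eta and G action} with $g = m,$ I obtain $\delta'(t) = dl_m H_\eta(e^{tH_\eta}\bare) = H_{m\cdot\eta}(\delta(t)),$ so uniqueness of flows gives $\delta(t) = e^{tH_{m\cdot\eta}}\bare.$ Evaluating at $t=1$ and substituting $m^{-1}\cdot\eta$ for $\eta$ yields $\widetilde\Phi(km, m^{-1}\cdot\eta) = \widetilde\Phi(k,\eta),$ so $\widetilde\Phi$ descends to a smooth $K$-equivariant map $\Phi_\gl$ fibered over $K/Z_K(\gl) \simeq G/P(\gl).$

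For the second step, the fiber map $\eta \mapsto \exp(H_\eta)\bare$ sends $(\fg/\fP(\gl))^*$ diffeomorphically onto $N(\gl)\bare:$ under $\eta = \eta_V$ it coincides (via $n \leftrightarrow n\bare$) with $V \mapsto \exp(H_V)e_N,$ which is a diffeomorphism $\fn(\gl) \xrightarrow{\sim} N(\gl)$ by Proposition \ref{p: flow gives diffeo}. Combining this fiberwise diffeomorphism with the $K$-equivariant bundle identification (\ref{e: fibration}) shows that $\Phi_\gl$ is an isomorphism of $K$-equivariant fiber bundles over $K/Z_K(\gl),$ hence a global diffeomorphism.

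The step I expect to require the most care is making sure that the two $Z_K(\gl)$-actions match under $V \mapsto \eta_V:$ the action on $(\fg/\fP(\gl))^*$ is by duality from the adjoint action on $\fg/\fP(\gl)$ (Lemma \ref{l: action induced by Ad}), while the action on $\fn(\gl)$ implicit in (\ref{e: fibration}) is the restricted adjoint action. The $\Ad$-invariance of the Killing form $B$ gives $\eta_{\Ad(m) V} = m \cdot \eta_V,$ which is precisely the compatibility needed to glue the equivariance of step one with the bundle identification of step two.
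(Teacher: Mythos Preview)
Your proposal is correct and follows essentially the same approach as the paper: both factor $\widetilde\Phi$ through the map $(k,\eta)\mapsto (k,\exp(H_\eta)\bare)$ into $K\times N(\gl)\bare$, invoke Proposition \ref{p: flow gives diffeo} for the fiberwise diffeomorphism, and use Lemma \ref{l: H eta and G action} to obtain the $Z_K(\gl)$-equivariance needed to descend to the associated bundle. Your integral-curve derivation of $m\exp(H_\eta)\bare=\exp(H_{m\cdot\eta})\bare$ is exactly what the paper records as $l_m\circ\exp(H_\eta)=\exp(H_{m\cdot\eta})\circ l_m$ evaluated at $\bare$, and your final compatibility check on $V\mapsto\eta_V$ is a harmless extra verification not strictly required once step one is in place.
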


\begin{proof}
The map $\widetilde \Phi$ is the composition of the
diffeomorphism
$$
\bar \Phi\, :\, K \times (\fg/\fP(\gl))^* \, \longrightarrow\, K \times N(\gl)\bare
$$
defined by $(k, \eta)\, \longmapsto (k, \exp (H_\eta)\bare)$ and the
submersion
$$
j\, :\, K \times N(\gl)\bare \, \longrightarrow\, G/Z(\gl)
$$
defined by $(k, n\bare)\,\longmapsto\, kn\bare$. The latter map
factors to a diffeomorphism $K \times_{Z_K(\gl)} N(\gl)\bare \, \longrightarrow\, G/Z(\gl)$.
The quotient $K \times_{Z_K(\gl)} N(\gl)$ is defined by using
the left action of $Z_K(\gl)$ on $N(\gl)\bare.$ It follows from Lemma \ref{l: H eta and G action}
that for all $m \in Z_K(\gl)$ we have
$$
l_m \circ \exp(H_\eta)\, =\,\exp(H_{m\cdot \eta}) \circ l_m
$$
on $N(\gl)\bare$. This implies that
$$
\bar\Phi(km, \eta) \,= \,\bar \Phi(k, m\cdot \eta)\, ,
$$
so that $\bar \Phi$ induces a  diffeomorphism
$K \times_{Z_K(\gl)} (\fg/\fP(\gl))^* \, \longrightarrow\, K \times_{Z_K(\gl)} N(\gl)\bare$.
In view of Lemma \ref{l: action induced by Ad} this completes the proof.
\end{proof}

The action of $K$ on $G/P(\gl)$ naturally induces an action of
$K$ on the total space of the cotangent bundle $T^*(G/P(\gl))$
through symplectomorphisms for
the Liouville symplectic form $\sigma$ on $T^*(G/P(\gl))$.
In particular, the stabilizer $Z_K(\gl) = K \cap P(\gl)$ acts linearly
on the cotangent space  of $T_{eP(\gl)} G/P(\gl)$ at $eP(\gl).$ The latter is
naturally identified with $(\fg/\fP(\gl))^*$. By Lemma \ref{l: action induced by Ad}
the resulting action of $Z_K(\gl)$ on $(\fg/\fP(\gl))^*$ coincides with the one
induced by the adjoint action of $Z_K(\gl)$ on $\fg.$

The map defining the action of $K$ on $T^*(G/P(\gl))$
induces a submersion
$$
K \times (\fg/\fP(\gl))^* \,\longrightarrow\, T^*(G/P(\gl))
$$
which factors to an isomorphism of vector bundles
$$
\Psi_\gl: \;\;K \times_{Z_K(\gl)} (\fg/\fP(\gl))^*
\,{\buildrel \simeq\over \longrightarrow} \, T^*(G/P(\gl))\, .
$$
We will complete the proof
of Theorem \ref{t: real Lagrangian fibration}
by showing that the bundle isomorphism
$$
\gf_\gl : = \Phi_\gl \after \Psi_\gl^{-1}: T^*(G/P(\gl)) \to G/Z(\gl)
$$
satisfies the properties
of the theorem.

{\em Completion of the proof of Theorem \ref{t: real Lagrangian fibration}.\ }
Since $K \cap P(\gl) = Z_K(\gl)$ and $G = KP(\gl),$ the inclusion map $K \to G$
induces a diffeomorphism $K/Z_K(\gl) \to G/P(\gl),$ whose inverse will be denoted by $\bar s.$
Let $j: K/Z_K(\gl) \to G/Z(\gl)$ be the embedding induced by the inclusion map; then $s = j \after \bar s$
is a section of the bundle $\pi: G/Z(\gl) \to G/P(\gl).$

Let $x \in G/P(\gl)$ and $\xi \in T_x^*(G/P(\gl)).$ Fix $k \in K$ such that $k Z_K(\gl) = s(x)$
and define $\eta: = dl_k(eP(\gl))^*\xi.$
Then
$$
\Psi_\gl([k, \eta]) = k \cdot \eta = \xi,
$$
so that
\begin{eqnarray*}
\gf_\gl(\xi) &=& \Phi_\gl([k, \eta])\\
&=& k \exp (H_\eta) \bar e\\
&=& \exp (H_{k\cdot \eta}) k \bar e\\
&=& \exp (H_\xi) s(x).
\end{eqnarray*}
It follows that $\gf = \gf_\gl$ equals the map defined by (\ref{e: defi gf for Lagrange fibration}), for the bundle
$\pi: G/Z(\gl) \to G/P.$ Moreover, in view of Lemma \ref{l: H eta and G action}, Proposition
\ref{p: flow gives diffeo} and Corollary \ref{c: defi Phi},
the proof of Theorem \ref{thb1} works with $U = T^*(G/P(\gl));$
in particular, all appearing flows of vector fields are defined without any restriction
on their domains.
This establishes conditions (a),(b) and (c) of Theorem \ref{thb1} with $U = T^*M$ and $\gf(U) = G/Z(\gl).$
{}From these, conditions (a), (b) and (c) of Theorem \ref{t: real Lagrangian fibration} follow.
Uniqueness of $\gf_\gl$ follow by the arguments of the proof of Theorem \ref{thb1} that are valid
without any restrictions on domains.
\qed


\end{document}